\tikzset{every picture/.style={>=latex}}
\tikzstyle{vert}=[scale=.8,
\tikzstyle{label}=[scale=.9]
\author{Valery Alexeev and Adrian Brunyate}
\address{Department of Mathematics, University of Georgia}
\email{valery@math.uga.edu, brunyate@math.uga.edu}
\title[Extending the Torelli map]%
{Extending the Torelli map to toroidal\\ compactifications of Siegel space}
\date{February 23, 2011; corrected: April 27 and May 25, 2011}
\begin{document}
\maketitle

\begin{abstract}
  It has been known since the 1970s that the Torelli map $\M_g \to
  \A_g$, associating to a smooth curve its jacobian, extends to a
  regular map from the Deligne-Mumford compactification $\oM_g$ to the
  2nd Voronoi compactification $\oA_g\vor$.  We prove that the
  extended Torelli map to the perfect cone (1st Voronoi)
  compactification $\oA_g\perf$ is also regular, and moreover
  $\oA_g\vor$ and $\oA_g\perf$ share a common Zariski open
  neighborhood of the image of $\oM_g$. We also show that the map to
  the Igusa monoidal transform (central cone compactification) is
  \emph{not} regular for $g\ge9$; this disproves a 1973 conjecture of
  Namikawa.
\end{abstract}

\tableofcontents

\section*{Introduction}

The Torelli map $\M_g \to \A_g$ associates to a smooth curve $C$ its
jacobian $JC$, a principally polarized abelian variety.  Does it
extend to a regular map $\oM_g\to \oA_g$ between the compactified
moduli spaces?

For the moduli space of curves $\M_g$, a somewhat canonical choice of
a compactification is provided by the Deligne-Mumford
compactification $\oM_g$, which we fix for the remainder of the paper.

We note in passing that recently other compactifications
$\oM_g(\alpha)$ were considered by many authors. These are log
canonical models of $\oM_g$ with respect to $K_{\oM_g}+\alpha \delta$,
where $\delta$ is the boundary. They also have modular
interpretation. For example, for $9/11\ge\alpha> 7/10$, assuming
$g\ge3$, $\oM_g(\alpha)$ is the moduli spaces of curves with nodes and
cusps and without elliptic tails. However, the extended map
$\oM_g(\alpha)\to \oA_g$ has no chance of being regular (unless
$\oM_g(\alpha)= \oM_g$) because curves of compact type with elliptic
tails map to the interior $\A_g$, which has to be suitably modified as
well.

For the moduli space of principally polarized abelian varieties
$\A_g$, by \cite{AMRT} there are infinitely many choices of toroidal
compactifications $\oA_g^{\tau}$, each determined by a fan $\tau$
supported on the space of positive semidefinite quadratic forms in
$g$ variables, periodic w.r.t. $\GL(g,\bZ)$, with only finitely many
orbits. There are three standard explicit choices for $\tau$, and they
all have interesting geometric meanings:
\begin{enumerate}
\item 1st Voronoi fan = perfect cones $\tau\perf$,
\item 2nd Voronoi fan = Delaunay-Voronoi fan = L-type domains $\tau\vor$,
\item central cones $\tau\cent$.
\end{enumerate}
The first two of these were defined by G. Voronoi in a series of
papers \cite{Voronoi08all} on reduction theory
of quadratic forms, published posthumously in 1908-9. 

The 2nd Voronoi compactification appears in \cite{Alexeev_CMAV} as the
normalization of the main irreducible component of the moduli space
$\overline{\operatorname{AP}}_g$ of stable semiabelic pairs
$(X,\Theta)$ which provides a moduli compactification of $\A_g$. On
the other hand, by \cite{ShepherdBarron_Perfect} the perfect cone
compactification $\oA_g\perf$ is the canonical model of any smooth
compactification of $\A_g$, if $g\ge 12$ (and also for all $g$, if
considered as stacks and relatively over Satake-Baily-Borel
compactification $\oA_g^*$).

The central cones fan was introduced by Igusa \cite{Igusa_BlowUp}; the
corresponding toroidal compactification $\oA_g\cent$ is the
normalization of the blowup of the Satake-Baily-Borel compactification
$\oA_g^*$ along the boundary (the ``Igusa blowup'').

The basic question we consider is this: for which choices of a fan
$\tau$ does the Torelli map $\M_g\to\A_g$ extend to a regular map map
$\oM_g\to\oA_g\utau$? For the 2nd Voronoi fan, a positive answer was
given by Mumford and Namikawa \cite[\S18]{Namikawa_NewCompBoth}. This
prompted an extensive study of the 2nd Voronoi compactification by
Namikawa \cite{Namikawa_NewCompBoth}, continued in the construction of
the moduli of stable semiabelic pairs $\overline{\operatorname{AP}}_g$
in \cite{Alexeev_CMAV}.  The work \cite{Alexeev_CompJacobians} gives a
modular interpretation for the extended Torelli map $\oM_g\to
\overline{\operatorname{AP}}_g$.

Historically, the extension question for the Igusa blowup was the
first one to be considered, in a pioneering 1973 paper
\cite{Namikawa_ExtendedTorelli} of Namikawa. There, it is shown that
$\oM_g\to\oA_g\cent$ is regular for low $g$ (the bound $g\le 6$ is
stated without proof), regular on the locus of curves with
a planar dual graph, and conjectured that the map is regular for all
$g$.

The question for the perfect cone compactifications was not previously
considered, to our knowledge. 

In this paper, we prove that the extended map is regular for the
perfect cone compactification for all $g$. Much more than that, we
prove that the perfect and the 2nd Voronoi compactifications share
a common open neighborhood of the image of $\oM_g$. Note
that in general there is a birational map $\oA_g\vor \dashrightarrow
\oA_g\perf$ which does not create new divisors. It is an isomorphism
iff $g\le3$, and regular for $g\le5$. According to
\cite{ErdahlRybnikov_VoronoiDickson, ErdahlRybnikov_CompareVoronois},
this map is not regular for $g\ge6$. Thus, for higher $g$ the two
compactifications are truly different, but we prove that they are equal
near the closure of the Schottky locus.

For the central cone compactification, we prove that the extended map
is regular for $g\le6$ and is \emph{not} regular for $g\ge9$. 
Continuing the methods of the present paper, \cite{Vigre_Genus7}
also settled the cases $g=7,8$ positively, by a lengthy computation. 

The structure of the paper is as follows. In
Section~\ref{sec:toroidal-comp} we recall the combinatorial data for a
toroidal compactification of $\A_g$, and define the fans $\tau\perf$,
$\tau\cent$, $\tau\vor$. 

In Section~\ref{sec:graphs} we fix the notations for graphs and define
\emph{edge-minimizing metrics}, which we abbreviate to \emph{emm}, on
the first cohomology group $H^1(G, \bZ)$ of a graph. 

In Section~\ref{sec:extension-general} we give a general
criterion for the regularity of the extended Torelli map
$\oM_g\to\oA_g\utau$, and illustrate it in the case of the 2nd Voronoi
compactification, as proved by Mumford and Namikawa. Then we reduce the cases of
perfect cones, resp. central cones, to the existence of an
$\bR$-valued, resp. a $\bZ$-valued emm for any graph of genus $\le
g$. 
We also prove that the existence of a strong $\bR$-emm implies that
$\oA_g\perf$ and $\oA_g\vor$ share a common open neighborhood of the
image of $\oM_g$.

In Section~\ref{sec:Z-emms}, we prove that a $\bZ$-emm exists for any
graph of genus $g\le 6$, and does not exist for some explicit graphs
of genus $9$, thus settling negatively the extension question for the
central cone compactification and $g\ge9$.

In Section~\ref{sec:R-emms}, we prove that a strong $\bR$-emm exists
for any graph, thus proving the regularity of $\oM_g\to\oA_g\perf$ and
the statement about a common neighborhood.

In the concluding Section~\ref{sec:misc} we discuss some possible
extensions of out results.

\begin{acknowledgments}
  The first author was partially supported by NSF under
  DMS-0901309. The subject of the paper was the topic of a VIGRE
  research group at the University of Georgia in the Fall of 2010, led
  by the first author. We would like to thank the participants of the
  group for many discussions, and to acknowledge NSF's VIGRE support
  under DMS-0738586. We also thank the referee for very helpful
  suggestions for improvements.
\end{acknowledgments}

\section{Toroidal compactifications of $\A_g$}
\label{sec:toroidal-comp}

Here, $\A_g$ stands for the moduli space of principally polarized
abelian varieties. The theory of its toroidal compactifications over $\bC$
was developed by Mumford and his coworkers in \cite{AMRT};
\cite{FaltingsChai} contains an extension to the arithmetic case,
over~$\bZ$.  It is parallel to the theory of ordinary toric varieties.

As in toric geometry, there are two dual lattices, $M$ (for monomials)
and $N$ (for 1-parameter subgroups in the torus).  The real vector
space $N_{\bR}$ is the ambient space for a fan $\tau$, and $M_{\bR}$
is the ambient space for polyhedra.  For compactifications of $\A_g$,
one fixes a free abelian group $\Lambda\simeq\bZ^g$. Then $M=\Sym^2
\Lambda$, and $N = \Gamma^2\Lambda^*$ is the dual abelian group, the
second divided power of $\Lambda^*$.

Let us choose a basis $f_1,\dotsc, f_g$ of $\Lambda$ and a dual basis
$f^*_1,\dotsc, f^*_g$ of $\Lambda^*$, so that $(f_i^*,f_j)=
\sigma_{ij}$.  Then the elements of the lattice $M=\Sym^2\Lambda$ are
integral homogeneous quadratic functions $q= \sum_{i\le j}
q_{ij}f_if_j$, $q_{ij}\in\bZ$, on $\Lambda^*$. These correspond to
symmetric \emph{half-integral} $g\times g$ matrices $A=(a_{ij})$,
which means that $a_{ii}\in\bZ$ and $a_{ij}\in\frac12\bZ$ for $i\ne
j$. Equivalently, $2A$ is the matrix of an even integral bilinear
form.

The elements of $N=\Gamma^2\Lambda^*$ are integral tensors 
$\sum b_{ij} f_i^*\otimes f_j^*$ symmetric under the involution 
$f_i^*\otimes f_j^* \mapsto f_j^*\otimes f_i^*$. Thus, $N$ can be
identified with the space of symmetric \emph{integral} matrices $B=(b_{ij})$,
$b_{ij}\in\bZ$. 

Both $M$ and $N$ can be considered as the lattices in the space of
real symmetric $g\times g$-matrices. They are dual with respect to the
inner product $(A,B)=\trace AB$.

Now let $C$ be the open cone $C$ in $N_{\bR}$ consisting of
positive-definite symmetric real matrices. This cone is self-dual with
respect to the above inner product.  One fixes its ``closure''
$\oC$. To be precise, $\oC$ is the real cone spanned by semi definite
positive symmetric matrices $B\ge 0$ \emph{with rational radical}
(i.e., the null space of $B$ has to have a basis of vectors with
rational coordinates).

Then a toroidal compactification $\oA_g^{\tau}$ of $\A_g$ is defined
by a fan $\tau$ (i.e. a collection of finitely generated rational
cones, closed under taking faces)
in $N_{\bR}$ satisfying the following properties:
\begin{enumerate}
\item $\Supp\tau = \oC$.
\item The natural $\GL(g,\bZ)$-action on $N_{\bR}$ sends cones of
  $\tau$ to cones of $\tau$.
\item There are only finitely many orbits of cones under this action.
\end{enumerate}

The following are three standard fans corresponding to three standard
toroidal compactifications of $\A_g$:

\medskip 

{\bf The perfect cones fan} $\tau\perf$, otherwise known as the
\emph{1st Voronoi} fan. The cones are defined to be the
cones over the faces of the convex hull of $N\cap (\oC\setminus
0)$. By a result of Barnes and Cohn \cite{BarnesCohn}, the vertices of
$\Conv N\cap (\oC\setminus 0)$ (that is, the rays of $\tau\perf$) are
of the form ${a^*}^2$, where $a^* = \sum a_i f_i^*$ is an integral
primitive (i.e. indivisible) nonzero element of $\Lambda^*$. Thus,
every perfect cone $\sigma$ has the form $\sigma = \sum_s \bR_{\ge0}\,
{a^*_s}^2$ for some collection $\{a^*_s\} \subset\Lambda^*\setminus
0$.

For $q\in\Sym^2\Lambda_{\bR}$, one has $(q,{a^*}^2) = q(a^*)$, the value of the
quadratic function $q$ at the integral point
$a^*\in\Lambda^*$. Thus, if $\sigma^\vee$ is the dual cone in
$M_{\bR}$, then the elements of the interior $(\sigma^\vee)^0$ are the
positive definite quadratic functions which attain the minimum on the
same finite subset $\{a^*_s\}$.

In particular, for a maximal cone $\sigma\in\tau\perf$, the cone
$\sigma^{\vee}$ is generated by one quadratic function which is
determined up to a multiple by the set of its minimal
integral nonzero vectors.  Such quadratic forms are called \emph{perfect},
hence the name of this fan.

\medskip

{\bf The second Voronoi fan} $\tau\vor$, sometimes referred to as
Delaunay-Voronoi fan, or $L$-type decomposition. The locally closed
cones $\tau^0$ of this fan consist of quadratic forms which define the
same Delaunay decomposition of $\Lambda_{\bR}/ \Lambda$. 

\medskip

{\bf The central cones fan} $\tau\cent$, corresponding to the
normalization of the Igusa blowup.  Let $Q$ be the convex hull of
$C\cap (M\setminus 0)$. This is an infinite polyhedron whose faces are
(finite) polytopes.  The fan $\tau\cent$ is the dual fan of $Q$. The
vertices of $\Conv\big( C\cap (M\setminus 0)\big)$ are called
\emph{central quadratic forms}. Note that they are integral by
definition.  The corresponding cones of $\tau\cent$ are
maximal-dimensional \emph{central cones}.

Each of the fans $\tau\perf$, $\tau\vor$, $\tau\cent$ admits a
strictly convex support function, (1) and (3) by definition and (2) by
\cite{Alexeev_CMAV}. Hence, the compactifications $\oA_g\perf$,
$\oA_g\vor$, $\oA_g\cent$ are projective by Tai's criterion 
\cite[IV.2]{AMRT}.

\section{Graphs and quadratic forms}
\label{sec:graphs}

$G$ will denote a graph with edges $e_i$, $i=1,\dotsc,m$ and vertices
$v_j$, $j=1,\dotsc,n$. 
We allow multiple edges and loops. We fix an
orientation of edges. Then we have the usual boundary homomorphism
\begin{equation*}
  \partial\colon 
  C_1(G,\bZ) = \oplus_{i} \bZ e_i 
  \to C_0(G,\bZ) = \oplus_j \bZ v_j 
  ,\qquad
  \partial e_i = \operatorname{end}(e_i) - \operatorname{beg}(e_i)
\end{equation*}
The kernel of this map is the space of cycles $H_1(G,\bZ)$ and the
cokernel is $H_0(G,\bZ)$. We will assume $G$ to be
connected, so that $H_0(G,\bZ)=\bZ$. Dually, we have the homomorphism
\begin{equation*}
  d\colon 
  C^0(G,\bZ) = \oplus_j \bZ v_j^*
  \to C^1(G,\bZ) = \oplus_{i} \bZ e_i^*
  ,\qquad
  d v_j^* = 
  \sum_{ v_j = \operatorname{end}(e_i)} e_i^* - 
  \sum_{ v_j =  \operatorname{beg}(e_i)} e_i^*
\end{equation*}
with kernel $H^0(G,\bZ)=\bZ$ and cokernel $H^1(G,\bZ)$. 

\begin{definition}
  We call the elements $e_i^*$ in $H^1(G,\bZ)$ \emph{coedges}, to
  distinguish them from the edges $e_i\in C_1(G,\bZ)$. Thus, coedges
  are cocycles and edges are chains.
\end{definition}

Since any graph is homotopy equivalent to a graph with one vertex and
$g$ loops for some $g\ge0$, called the \emph{genus} of $G$,
$H_1(G,\bZ)$ and $H^1(G,\bZ)$ are free abelian groups of
rank $g$, dual to each other.  

\begin{lemma}\label{1conn-cohomology}
  One has the following:
  \begin{enumerate}
  \item The elements $e_i^*$ span $H^1(G,\bZ)$.
  \item $e_i^*=0$ iff the edge $e_i$ is a bridge in $G$.
  \item The graph $G$ is a simple loop (a graph with one vertex and
    one edge) or is loopless and is 2-connected $\iff$ the edges
    \emph{can not} be divided into two disjoint groups $I_1\sqcup I_2$
    such that
    \[
    H^1(G,\bZ) = \langle e^*_{i_1} \rangle
    \oplus \langle e^*_{i_2}\rangle,
    \quad i_s\in I_s.
    \]
  \end{enumerate}
\end{lemma}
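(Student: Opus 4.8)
The plan is to prove the three items in order, using the explicit description of $H^1(G,\bZ)$ as the cokernel of $d\colon C^0(G,\bZ)\to C^1(G,\bZ)$ together with standard facts about spanning trees. For part (1): the image of $d$ is exactly the subgroup of coboundaries, and since $G$ is connected the rank of $\operatorname{im} d$ is $n-1$ while $C^1(G,\bZ)$ has rank $m$, so $H^1(G,\bZ)$ has rank $m-(n-1)=g$; but the $e_i^*$ are by definition the images of a basis $e_1^*,\dots,e_m^*$ of $C^1(G,\bZ)$, hence they span the quotient. (Alternatively, fix a spanning tree $T$; the $m-n+1$ edges not in $T$ give classes that form a basis of $H^1(G,\bZ)$, so in particular the full collection $\{e_i^*\}$ spans.) For part (2): an edge $e_i$ is a bridge iff it lies in every spanning tree iff it lies in no cycle of $G$; and $e_i^*=0$ in $H^1(G,\bZ)$ iff $e_i^*$ is a coboundary, i.e.\ $e_i^* = d\phi$ for some $\phi\in C^0(G,\bZ)$. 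I would show directly that $e_i^*\in\operatorname{im} d$ forces $e_i$ to separate the vertex set into the support of $\phi$ and its complement, hence $e_i$ is a bridge; conversely, if $e_i$ is a bridge separating $G$ into $G_1\sqcup G_2$, then $e_i^* = d(\sum_{v\in G_2} v^*)$ (up to sign), so $e_i^*=0$. Equivalently one can pair with cycles: $e_i^*=0$ in $H^1$ iff $\langle e_i^*, c\rangle = 0$ for all $c\in H_1(G,\bZ)$, which by the definition of the boundary pairing says precisely that $e_i$ appears in no cycle, i.e.\ is a bridge.

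For part (3), which I expect to be the main obstacle, the idea is to relate the algebraic splitting $H^1(G,\bZ)=\langle e^*_{i_1}\rangle\oplus\langle e^*_{i_2}\rangle$ to a topological decomposition of $G$. After contracting all bridges (which by (2) kills the corresponding coedges and does not change $H^1$), we may assume $G$ is bridgeless. If $G$ is a simple loop, or if $G$ is loopless and $2$-connected, then I claim no such splitting exists: pick a spanning tree $T$ and note that for any edge $e_j\notin T$, the fundamental cycle through $e_j$ meets both $I_1$ and $I_2$ unless all of its edges lie on one side; a $2$-connectivity argument (any two edges lie on a common cycle, or are joined by two edge-disjoint paths) shows the cycle space cannot be an orthogonal direct sum along an edge partition. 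Conversely, if $G$ is bridgeless but not $2$-connected (and not a single loop), then $G$ has a cut vertex $v$; writing $G = G_1 \cup_v G_2$ as a one-point union of two subgraphs, each with at least one edge, the edge sets $I_1 = E(G_1)$, $I_2 = E(G_2)$ give $H_1(G,\bZ) = H_1(G_1,\bZ)\oplus H_1(G_2,\bZ)$, and dualizing yields the desired splitting $H^1(G,\bZ) = \langle e^*_{i_1}\rangle \oplus \langle e^*_{i_2}\rangle$. The case of loops needs separate bookkeeping: a loop at a vertex of a larger graph is itself a $2$-connected block in the sense of block decomposition, so a graph with a loop and at least one other edge admits a splitting (peel off the loop), which is consistent with the statement.

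The cleanest way to organize part (3) is via the block decomposition (decomposition into $2$-connected components and bridges): $H^1(G,\bZ)$ is the direct sum of the $H^1$'s of the blocks, bridges contribute nothing, and each non-bridge block is either a single loop or is loopless and $2$-connected. An edge partition as in the statement exists iff there is more than one non-bridge block, i.e.\ iff $G$ is \emph{not} a single such block. So the remaining nontrivial point is the base case: a loopless $2$-connected graph (or a single loop) admits no nontrivial edge partition making $H^1$ an orthogonal direct sum of spans of coedges — this is where I would do the real work, using that in a $2$-connected graph the fundamental cycles with respect to any spanning tree are ``connected'' along the edge set in a way that prevents the cycle lattice from decomposing.
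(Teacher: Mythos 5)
Your treatments of (1) and (2) are correct; the paper simply declares these "obvious." For (3) your overall strategy --- translate the algebraic splitting of $H^1$ into a topological decomposition, and use cut vertices / block decomposition for the easy direction --- is essentially the paper's route, but you have a genuine gap in the hard direction, which you yourself flag: "this is where I would do the real work."

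Concretely, what is missing is the duality step that makes the 2-connectivity fact usable. The paper first observes that, with $G_s$ the subgraph on $I_s$, the annihilator of $\langle e^*_{i_s}\rangle$ in $H_1(G)$ is exactly $H_1(G_{3-s})$. Since the coedges always span $H^1$, the splitting $H^1 = \langle e^*_{i_1}\rangle\oplus\langle e^*_{i_2}\rangle$ is equivalent to $\langle e^*_{i_1}\rangle\cap\langle e^*_{i_2}\rangle=0$, which dualizes to $H_1(G)=H_1(G_1)+H_1(G_2)$, which in turn is equivalent to \emph{every simple cycle of $G$ lying entirely in $G_1$ or in $G_2$} (a simple cycle has connected support, so it cannot be a sum of two cycles with disjoint edge supports unless one summand vanishes). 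Only after this translation does the 2-connectivity fact you invoke --- in a $2$-connected loopless graph any two edges lie on a common simple cycle --- immediately yield a contradiction: pick $e\in I_1$, $e'\in I_2$ and a simple cycle through both, and it lies in neither $G_1$ nor $G_2$. Your sketch ("the fundamental cycles ... are connected along the edge set in a way that prevents the cycle lattice from decomposing") and your passing use of the word "orthogonal" (the direct sum here is not orthogonal with respect to any form; that only enters in later lemmas) are symptoms of not having this translation in hand. The block-decomposition packaging you propose for the easy direction is fine and matches the paper's "obviously there is such a decomposition" via a cut vertex or a loop; but for the forward implication, state and prove the equivalence between the coedge splitting and the simple-cycle condition, and the rest falls out.
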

\begin{proof}
  (1) and (2) are obvious. For (3), consider a partition of edges
  $I_1\sqcup I_2$, and denote by $G_s$, $s=1,2$, the graph formed by
  the edges of $I_s$. Note that the zero set of $I_s$ in $H_1(G)$ is
  $H_1(G_{3-s})$. Since $e_i^*$ span $H^1(G,\bZ)$, the condition of
  (3) is that the intersection is zero, equivalently that $H_1(G)$ is
  spanned by $H_1(G_1)$ and $H_1(G_2)$, i.e. every simple cycle in $G$
  lies entirely either in $G_1$ or in $G_2$. If $G$ has a loop (but
  $G$ is
  not a loop itself) or $G$ is not
  2-connected, then obviously there is such a decomposition. Vice
  versa, given such a decomposition, every vertex in $G_1\cap G_2$ is
  a cut of $G$ or is a vertex of a loop, so $G$ is not 2-connected or
  it has a loop.
\end{proof}

The following lemma gives explicit $\bZ$-bases for $H_1(G,\bZ)$ and
$H^1(G,\bZ)$.

\begin{lemma}\label{lem:coedges-unimodular}
  For a collection of edges $e_i$, $i\in I\subset \{1,\dots,m\}$, the
  following conditions are equivalent:
  \begin{enumerate}
    \item $e_i^*$ form an $\bR$-basis of $H^1(G,\bR)$.
    \item $e_i^*$ form a $\bZ$-basis of $H^1(G,\bZ)$.
    \item The complement of $\{e_i\}$ is a spanning tree $T$ of $G$.
  \end{enumerate}
  If either of these conditions is satisfied then there exists a basis
  of $H_1(G,\bZ)$ of the form
  \begin{equation*}
    f_i = e_i + \sum_{e_s\in T} b_{is} e_s, 
    \quad b_{is} = 0, \pm 1, \ i\in I.
  \end{equation*}
\end{lemma}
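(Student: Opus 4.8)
The plan is to establish the cycle $(3)\Rightarrow(2)\Rightarrow(1)\Rightarrow(3)$, producing the explicit basis $\{f_i\}$ along the way in the first step. The implication $(2)\Rightarrow(1)$ needs nothing, since a $\bZ$-basis of $H^1(G,\bZ)$ is an $\bR$-basis of $H^1(G,\bR)=H^1(G,\bZ)\otimes_{\bZ}\bR$. Throughout I use the facts recorded above: $H_1(G,\bZ)$ and $H^1(G,\bZ)$ are free of rank $g=m-n+1$ and are dual to each other, the pairing being induced by the perfect pairing between $C_1(G,\bZ)$ and $C^1(G,\bZ)$ for which the $e_i$ and the $e_i^*$ are dual bases; concretely, for a cycle $z\in H_1(G,\bZ)$ the value $\langle e_i^*,z\rangle$ is the coefficient of $e_i$ in $z$, and this descends to $H^1$ because $\langle d v_j^*, z\rangle=\langle v_j^*,\partial z\rangle=0$.

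For $(3)\Rightarrow(2)$ and the final assertion: assume $T=\{e_s : s\notin I\}$ is a spanning tree, so $|I|=m-(n-1)=g$. For each $i\in I$ the subgraph $T\cup\{e_i\}$ contains a unique simple cycle (the fundamental cycle of $e_i$ with respect to $T$); orienting it so that $e_i$ appears with coefficient $+1$ yields an element $f_i=e_i+\sum_{e_s\in T}b_{is}e_s$ of $H_1(G,\bZ)$ with each $b_{is}\in\{0,\pm1\}$, since a simple cycle traverses each edge at most once. The projection $\pi\colon H_1(G,\bZ)\to\bigoplus_{i\in I}\bZ e_i$ forgetting the tree coordinates is injective, because any cycle supported on the acyclic graph $T$ is zero; and $\pi(f_i)=e_i$, so $\pi$ is onto, hence an isomorphism, and $\{f_i\}_{i\in I}$ is a $\bZ$-basis of $H_1(G,\bZ)$. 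This is exactly the basis claimed in the lemma. Finally, for $i,j\in I$ one has $\langle e_i^*, f_j\rangle=\delta_{ij}$ (the coefficient of $e_i$ in $f_j$), so $\{e_i^*\}_{i\in I}$ is the dual basis of $\{f_i\}_{i\in I}$ and is therefore a $\bZ$-basis of $H^1(G,\bZ)$.

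For $(1)\Rightarrow(3)$: if $\{e_i^*\}_{i\in I}$ is an $\bR$-basis of $H^1(G,\bR)$ then $|I|=g$, so $T=\{e_s:s\notin I\}$ has exactly $n-1$ edges. It suffices to show $T$ is acyclic, for a forest with $n-1$ edges on the $n$ vertices of $G$ is connected, hence a spanning tree. If instead $T$ contained a cycle, with class $0\ne z\in H_1(G,\bR)$, then $z$ would be supported on $T$, so $\langle e_i^*,z\rangle=0$ for every $i\in I$; since the $e_i^*$, $i\in I$, span $H^1(G,\bR)$ and this space is the dual of $H_1(G,\bR)$, we would get $z=0$, a contradiction.

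The only point that needs care is the integrality in $(3)\Rightarrow(2)$: it is routine that the fundamental cycles are linearly independent over $\bR$, but the lemma asserts a $\bZ$-basis, so one genuinely needs $\pi$ to be an isomorphism — not merely injective with finite cokernel — and dually that the coedges $e_i^*$, $i\in I$, generate all of $H^1(G,\bZ)$ rather than a proper finite-index subgroup. Everything else follows formally from the perfect duality between $H_1(G,\bZ)$ and $H^1(G,\bZ)$ together with the elementary fact that a forest on $n$ vertices with $n-1$ edges is a spanning tree.
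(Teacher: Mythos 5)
Your proof is correct and takes essentially the same approach as the paper: $(3)\Rightarrow(2)$ via fundamental cycles with respect to the spanning tree, and $(1)\Rightarrow(3)$ by pairing a would-be cycle in the complement against the $e_i^*$. Your version is slightly more explicit than the paper's about why $\{f_i\}$ is a $\bZ$-basis (via the projection isomorphism onto $\bigoplus_{i\in I}\bZ e_i$), but the underlying idea is identical.
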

\begin{proof}
  Of course, (2) implies (1). Let us prove (1)$\Rightarrow$(3). Note
  that $|I|=g$.

  By the Euler's formula, $g(G)= m+1-n$ and $\chi(G) = 1-g$.
  Since the graph $G'=G\setminus \{e_i,\ i\in I\}$ has the same
  vertices and $g$ fewer edges, we have $\chi(G')=1$. Then either
  $G'$ is connected and is a tree, or else $G'$ is
  disconnected and has a nonzero loop, call it $\ell$. Then for all $i\in I$
  we have $e_i^*(\ell)=0$, hence $\{e_i^*,\ i\in I\}$ is not a basis
  of $H^1(G,\bR)$. QED.

  (3)$\Rightarrow$(2). We prove this by constructing a dual basis
  $\{f_i\}$ in $H_1(G,\bZ)$ to the set $\{e_i^*\}$.  Since $T$
  is a tree, for each $j$ there exists a unique path in $T$ from the
  end to the beginning of $e_i$. In other words, there exists a unique
  $f_i\in H_1(G,\bZ)$ which can be written as
  \begin{equation*}
    f_i = e_i + \sum_{e_s\in T} b_{is} e_s, 
    \quad b_{is} = 0, \pm 1.
  \end{equation*}
  Then it is clear that   $e_i^*(f_k) = 1$ if $j=k$ and $0$
  otherwise. Thus, $\{e_i^*\}$ and $\{f_i\}$ are dual bases in 
  $H^1(G,\bZ)$ and $H_1(G,\bZ)$.
\end{proof}

\begin{definition}
  An \emph{edge-minimizing metric}, abbreviated to \emph{emm}, of a graph
  $G$ is a quadratic form $q\in \Sym^2H_1(G)$ such that
  \begin{enumerate}
  \item $q>0$, i.e. $q$ is positive definite.
  \item $q(e_i^*)=1$ for each edge $e_i$ which is not a bridge
    (i.e. for each $e_i^*\ne0$).
  \item $q(v^*)\ge1$ for any $v^*\in H^1(G,\bZ)\setminus 0$.
  \end{enumerate}
  A \emph{strong} edge-minimizing metric, in addition, satisfies the
  following: if $q(v^*)=1$ for some $v^*\in H^1(G,\bZ)$ then $\pm v^*$
  is a coedge.

  In other words, $q$ is a metric on the lattice $H^1(G,\bZ)$ and the
  nonzero $\pm e_i^*$ are among the shortest 
  (resp. exactly the shortest) integral vectors in this metric.
\end{definition}

We will distinguish between $q\in \Sym^2H_1(G,R)$, where $R$ is $\bZ$,
$\bQ$, or $\bR$. We will call these $\bZ$-emm, $\bQ$-emm, $\bR$-emm
respectively. There will be no difference between $\bQ$-emms and
$\bR$-emms for our purposes.

\begin{definition}
  By Lemma~\ref{1conn-cohomology}(3), one has $H^1(G,\bZ) = \oplus_k
  H^1(G_k,\bZ)$ for some graphs $G_k$ so that each $G_k$ is either
  a simple loop or loopless and 2-connected, and so that each nonzero
  $e_i^*$ lies in one of the direct summands. We may call $G_k$
  \emph{irreducible components of~$G$}.
\end{definition}

\begin{lemma}
  There exists a $(\bZ,\bQ,$ or $\bR)$ emm for a graph $G$ $\iff$
  there exist emms for each irreducible component $G_k$.
\end{lemma}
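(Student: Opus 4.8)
The plan is to move between $G$ and its irreducible components by taking an orthogonal direct sum of quadratic forms in the ``if'' direction and by restricting a form to a subspace in the ``only if'' direction. First I would record the structural input: by Lemma~\ref{1conn-cohomology}(3) the splitting $H^1(G,\bZ)=\bigoplus_k H^1(G_k,\bZ)$ is a splitting \emph{of lattices}, so an integral class $v^*$ decomposes as $v^*=\sum_k v^*_k$ with $v^*_k\in H^1(G_k,\bZ)$, and each nonzero coedge $e^*_i$ lies in a single summand, where it is a nonzero coedge of the corresponding $G_k$. Dually $H_1(G,R)=\bigoplus_k H_1(G_k,R)$ for $R=\bZ,\bQ,\bR$, so a form on $H_1(G_k,R)$ can be viewed inside $\Sym^2 H_1(G,R)$.

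For the ``if'' direction, given emms $q_k\in\Sym^2 H_1(G_k,R)$ I would take $q\in\Sym^2 H_1(G,R)$ to be their orthogonal direct sum, i.e. the form with $q(v^*)=\sum_k q_k(v^*_k)$ for $v^*=\sum_k v^*_k$. Positive definiteness and the normalization $q(e^*_i)=q_k(e^*_i)=1$ on nonzero coedges are immediate because each $e^*_i$ lives in one summand. The one axiom that needs an argument is (3): for $v^*\in H^1(G,\bZ)\setminus 0$, writing $v^*=\sum_k v^*_k$ with some $v^*_{k_0}\neq 0$, one gets
\[
q(v^*)=\sum_k q_k(v^*_k)\ \geq\ q_{k_0}(v^*_{k_0})\ \geq\ 1,
\]
where the first inequality uses that every $q_k$ is positive \emph{definite} (hence the remaining summands are $\geq 0$) and the second is axiom (3) for $q_{k_0}$. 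The same display also handles the strong case: if all $q_k$ are strong and $q(v^*)=1$, then all but one $v^*_k$ vanish and $q_{k_0}(v^*_{k_0})=1$, so $\pm v^*$ is a coedge of $G_{k_0}$, hence of $G$.

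For the converse, given an emm (or strong emm) $q$ for $G$ I would simply restrict: $q_k:=q|_{\Sym^2 H_1(G_k,R)}$. The restriction of a positive definite form to a subspace is positive definite, and conditions (2) and (3) (and strongness) for $q_k$ follow at once from those for $q$, since $H^1(G_k,\bZ)\subset H^1(G,\bZ)$ and coedges of $G_k$ are coedges of $G$. As none of this uses the coefficient ring, the same proof covers $\bZ$-, $\bQ$- and $\bR$-emms uniformly, which is also why $\bQ$- and $\bR$-emms are interchangeable here. I do not expect a genuine obstacle; the only point that is not purely formal is that the decomposition of $H^1(G,\bZ)$ must be a decomposition of lattices and that each $q_k$ is positive definite rather than merely semidefinite — this is exactly what makes the cross terms in the displayed inequality nonnegative, so that a shortest integral vector for $G$ restricts to a shortest one in some $G_k$, and conversely a shortest vector in a single $G_k$ remains shortest after the orthogonal direct sum.
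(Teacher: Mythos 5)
Your proof is correct and takes the same approach as the paper: restrict $q$ to each $H^1(G_k,\bZ)$ for the ``only if'' direction, and take the orthogonal direct sum $q=\sum_k q_k$ for the ``if'' direction. The paper states this in two sentences without spelling out the verification; your write-up simply makes explicit the key point that positive definiteness of the summands ensures the cross terms are nonnegative, so shortest vectors behave correctly under the direct sum.
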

\begin{proof}
  The restriction $q_k$ of an emm $q$ to each $H^1(G_k,\bZ)$ is an emm. Vice
  versa, given emms $q_k$ for graphs $G_k$, we can take $q=\sum q_k$
  to be an emm for $G$.
\end{proof}

\begin{lemma}\label{lem:reduction}
  To construct a $(\bZ,\bQ,$ or $\bR)$ emm for a graph $G$, it is
  sufficient to construct an emm for several related cubic bridgeless
  graphs.
\end{lemma}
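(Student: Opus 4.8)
The plan is to reduce the construction of an emm for an arbitrary graph $G$ to the construction of an emm for a collection of cubic bridgeless graphs, by a sequence of operations that do not change $H^1(G,\bZ)$ together with the distinguished set of coedges $e_i^*$, or change it in a controlled way. By the two preceding lemmas we may assume $G$ is connected, has no bridges (bridges contribute $e_i^*=0$ and can simply be contracted or deleted without affecting $H^1$), and is either a single loop or loopless and $2$-connected; in the trivial loop case any positive multiple of the standard form works, so assume $G$ is loopless and $2$-connected.

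**Removing high-valence vertices by vertex splitting.**
First I would handle vertices of valence $\ge 4$. Given such a vertex $v$ with incident edges $e_{i_1},\dots,e_{i_k}$, $k\ge 4$, I split $v$ into two vertices $v', v''$ joined by a new edge $e_0$, distributing the old incident edges between $v'$ and $v''$ (each getting at least two, so no new bridge or valence-$2$ vertex is forced), obtaining a graph $G'$ of genus $g+1$. There is a natural inclusion $H^1(G,\bZ)\hookrightarrow H^1(G',\bZ)$: a cocycle on $G$ pulls back to one on $G'$ that vanishes on $e_0$; concretely $H^1(G',\bZ)\cong H^1(G,\bZ)\oplus\langle e_0^*\rangle$, with $e_i^*$ for $i\ne 0$ identified with the old coedges. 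Then an emm $q'$ on $G'$ restricts to a positive definite form on $H^1(G,\bZ)$ with $q'(e_i^*)=1$ for all non-bridge $e_i$, and condition (3) (and the strong condition) is inherited because $H^1(G,\bZ)$ is a \emph{saturated} sublattice of $H^1(G',\bZ)$, so every nonzero integral vector of $H^1(G,\bZ)$ is a nonzero integral vector of $H^1(G',\bZ)$. Iterating, I reduce the maximum valence to $3$.

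**Removing valence-$2$ vertices by contraction, and the loop/parallel-edge cleanup.**
Second, a vertex of valence $2$ with incident edges $e_i, e_j$ (necessarily distinct and non-bridge in a $2$-connected loopless graph, unless $G$ is itself a cycle, handled separately) can be smoothed: contract $e_j$, merging $e_i$ and $e_j$ into a single edge $\bar e$. On cohomology this is an isomorphism $H^1(\bar G,\bZ)\xrightarrow{\sim}H^1(G,\bZ)$ identifying $\bar e^*$ with $e_i^*=e_j^*$ (these two coedges are already equal in $H^1(G,\bZ)$ since $e_i-e_j$ — suitably oriented — is the boundary of the valence-$2$ vertex, hence $e_i^*=e_j^*$ as functionals). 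So an emm for $\bar G$ is literally an emm for $G$. The one subtlety: smoothing may create a loop or a pair of parallel edges that then collapses further; but loops are bridgeless-contractible only in the trivial case, and parallel edges $e_i, e_j$ between the same two vertices give $e_i^*=-e_j^*$ (with appropriate orientation), so again an emm is unaffected by deleting one of them — and after all such simplifications the graph is cubic (every vertex valence exactly $3$) and bridgeless, or is a cycle / single loop where the claim is trivial.

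**The main obstacle and conclusion.**
Assembling these: starting from $G$, apply vertex splits to kill valences $\ge 4$, then smoothings to kill valences $= 2$, arriving at finitely many cubic bridgeless graphs $G'_1,\dots,G'_r$ (there may be choices in how to distribute edges at a split, but any one choice suffices, so really one cubic graph $G'$) together with a saturated embedding $H^1(G,\bZ)\hookrightarrow H^1(G',\bZ)$ compatible with coedges. An emm, resp. strong emm, for $G'$ restricts to one for $G$: conditions (1)–(3) and the strong condition all descend along a saturated sublattice inclusion that sends coedges to coedges, by the argument in the vertex-splitting step. Hence it suffices to construct emms for cubic bridgeless graphs. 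The main thing to get right — the only place requiring genuine care rather than bookkeeping — is the \emph{saturatedness} of $H^1(G,\bZ)$ inside $H^1(G',\bZ)$ after a vertex split: this is what guarantees that the ``$q(v^*)\ge 1$ for all integral $v^*$'' condition (and its strong refinement) is not destroyed by passing to the smaller lattice, and it follows because the quotient $H^1(G',\bZ)/H^1(G,\bZ)\cong \langle e_0^*\rangle$ is free of rank one (equivalently, $H_1(G,\bZ)$ is a direct summand of $H_1(G',\bZ)$, being the kernel of the surjection $H_1(G',\bZ)\to\bZ$ reading off the $e_0$-coordinate).
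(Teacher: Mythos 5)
Your overall strategy matches the paper's: reduce to $2$-connected loopless (or single-loop) components, smooth degree-$2$ vertices, then split degree-$\ge 4$ vertices until the graph is cubic. However, your vertex-splitting step contains a genuine error. Splitting $v$ into $v',v''$ joined by a new edge $e_0$ adds one vertex \emph{and} one edge, so by Euler's formula the genus of $G'$ is $(E+1)-(V+1)+1 = E-V+1 = g$, \emph{not} $g+1$. Contracting $e_0$ is a homotopy equivalence $G'\to G$, so the induced map $H^1(G,\bZ)\to H^1(G',\bZ)$ is an \emph{isomorphism}, not a proper saturated sublattice inclusion; the quotient you invoke is trivial, not $\langle e_0^*\rangle$. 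Thus the step you flag as ``the only place requiring genuine care'' rests on a false premise. The correct (and simpler) reason the reduction works, and the one the paper gives, is that $G'$ has the \emph{same} $H^1$ as $G$ but a strictly larger set of coedges (the extra $e_0^*$), so the emm conditions for $G'$ are a strict strengthening of those for $G$ on the very same lattice.

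You also claim that a \emph{strong} emm for $G'$ descends to a strong emm for $G$. This overreaches and is in fact false in general: the new coedge $e_0^*$ may be one of the shortest vectors but fail to be $\pm e_i^*$ for any original edge $e_i$, in which case a strong emm of $G'$ is \emph{not} a strong emm of $G$. The paper is careful about exactly this point: Lemma~\ref{lem:reduction} is stated only for ordinary emms, and the strong case requires a separate $\epsilon$-perturbation argument in Theorem~\ref{thm:Remm-existence}, precisely because the inclusion $S(G)\subset S(G')$ of coedge-squares may be strict. Your treatment of degree-$2$ vertices (smoothing, with $e_i^*=e_j^*$ on $H^1$ at a degree-$2$ vertex) is correct and agrees with the paper.
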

A remark concerning our terminology: \emph{cubic} is the same as trivalent, and
\emph{bridgeless} is the same as 2-connected.
\begin{proof}
  By the above Lemma, it is sufficient to construct an emm for
  each irreducible component $G_k$. If $G_k$ is a loop then $q=x^2$ is
  a $\bZ$-emm. So assume $G_k$ is not a loop.
  
  Removing vertices of degree 2 and replacing the adjacent two edges by
  a single edge results in reducing some duplication in the set $\{
  e_i^* \}$.  Next, we inductively insert an edge into a vertex of
  degree $\ge4$, until we get to a cubic graph $G'_k$. Doing so does
  not change $H_1$ but adds more vectors $e_i^*$, so the condition for
  $G'_k$ is stronger than for $G_k$. 
\end{proof}

\section{Criteria for the regularity of the extended Torelli map}
\label{sec:extension-general}

As in Section~\ref{sec:toroidal-comp}, we fix a lattice
$\Lambda\simeq\bZ^g$, a fan $\tau$, and a corresponding toroidal
compactification $\oA_g\utau$.  We also consider a graph $G$ of genus
$a\le g$ and write its homology as a quotient $\Lambda\onto
H_1(G,\bZ)$.  This gives a cotorsion embedding of $N(G):=\Gamma^2
H^1(\Gamma,\bZ)$ into $N=\Gamma^2\Lambda^*$.
We work either over a field or over $\bZ$.

\begin{definition}\label{def:set-squares}
  We will denote by $S(G)$ the set of nonzero vectors ${e_i^*}^2$ in
  $N(G)$.
\end{definition}

\begin{theorem}[General criterion]
  \label{thm:gen-crit}
  The Torelli map $\oM_g\dashrightarrow \oA_g\utau$ is regular
  in a neighborhood of a stable curve $[C]$ iff for the dual graph $G(C)$
  there exists a cone $\sigma$ in the fan $\tau$ such that
  $S(G)\subset\sigma$.
\end{theorem}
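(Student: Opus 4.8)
The plan is to reduce the statement to the standard description of how toroidal compactifications look near the boundary stratum associated to a maximal-rank degeneration, and then to identify the cone into which the monodromy data lands. First I would recall from \cite{AMRT, FaltingsChai} the local structure of $\oA_g\utau$: \'etale-locally near a point of the boundary coming from a semiabelian degeneration with toric rank $a$, the compactification is modeled on (an open subset of) a torus embedding $T_N \hookrightarrow T_N\operatorname{emb}(\tau|_{N(G)})$, where one restricts the fan $\tau$ to the sublattice $N(G) = \Gamma^2 H^1(G,\bZ)$ cut out by the chosen surjection $\Lambda \onto H_1(G,\bZ)$. The key input is that the ``monodromy cone'' of the degeneration of jacobians coming from a stable curve $C$ with dual graph $G$ is precisely the cone spanned by $S(G) = \{{e_i^*}^2 : e_i^* \ne 0\}$ inside $N(G)_\bR \subset N_\bR$ --- this is the classical computation of the period matrix of a degenerating jacobian (Mumford's construction, as in \cite{Namikawa_NewCompBoth}), where the $i$-th node contributes the rank-one form ${e_i^*}^2$ to the limiting (degenerate) quadratic form, with the plumbing parameter as coordinate.

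Next I would make precise the extension criterion for a map from a smooth (or normal-crossings) base. Given the versal deformation of $C$, which near the boundary looks like $\Spec k[[t_1,\dots,t_m,\dots]]$ with the $t_i$ the smoothing parameters of the nodes, the induced rational map to $\A_g$ lifts (after a suitable level structure / as a map of stacks) to the chart of $\oA_g\utau$ corresponding to the cone generated by $S(G)$ provided that cone is actually a cone of the fan $\tau|_{N(G)}$, and more generally the map extends regularly over the boundary divisor $\{t_1\cdots t_m = 0\}$ iff the monodromy cone is \emph{contained in} some cone $\sigma$ of $\tau$ --- because a map to a torus embedding $T_N \to T_N\operatorname{emb}(\tau)$ from $\Spec k[[t_i]] \setminus V(\prod t_i)$ extends over the whole disk polydisk iff the cocharacter/monodromy data lies in the support of the fan in a single cone, by the valuative criterion of properness for toric varieties applied cone-by-cone. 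One must check the monodromy is ``unipotent enough'' and that $S(G)$ genuinely generates the monodromy cone; here Lemma~\ref{lem:coedges-unimodular} is what guarantees that the $e_i^*$ behave well integrally, so that the analysis on $N(G)$ rather than on a finite-index overlattice is legitimate.

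I would then assemble these: the map $\oM_g \dashrightarrow \oA_g\utau$ is defined on the interior $\M_g$; regularity in a neighborhood of $[C]$ is a statement about extending over the boundary strata through $[C]$; by the local models above this holds iff the monodromy cone $\operatorname{Cone}(S(G))$ is contained in a cone $\sigma \in \tau$; and since the full fan $\tau$ is $\GL(g,\bZ)$-invariant, containment in a cone of $\tau|_{N(G)}$ is the same as containment in a cone of $\tau$ (intersected back down to $N(G)_\bR$), so the condition is exactly ``$S(G) \subset \sigma$ for some $\sigma \in \tau$.'' One should also note the two directions: if such $\sigma$ exists the extension is regular by the valuative criterion; conversely if no such $\sigma$ exists then one can produce a one-parameter family (a test curve through $[C]$ transverse to the boundary) whose monodromy cone is not in any $\sigma$, so the map cannot extend --- this uses that the interior of $\operatorname{Cone}(S(G))$ meets $\A_g$'s period domain, i.e.\ the degeneration is realized.

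The main obstacle I expect is \emph{not} the toric valuative-criterion bookkeeping but rather pinning down precisely that the relevant monodromy/period cone of the Torelli family at $[C]$ is $\operatorname{Cone}(S(G))$ with the correct integral structure inside $N = \Gamma^2\Lambda^*$ --- in particular handling bridges (which give ${e_i^*}^2 = 0$, correctly contributing nothing, consistent with Lemma~\ref{1conn-cohomology}(2)), and checking the map $\Lambda \onto H_1(G,\bZ)$ and the cotorsion embedding $N(G) \hookrightarrow N$ interact correctly with the lattice $N$ so that ``cone of $\tau$'' restricted to $N(G)_\bR$ is the right notion. Getting the stack-vs-scheme level-structure issues right (so that the rational map is genuinely defined and the valuative criterion applies on the nose) is the other delicate point, but it is standard and I would cite \cite{AMRT} for it rather than redo it.
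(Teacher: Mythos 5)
Your proposal is correct and follows essentially the same route as the paper. The paper's proof runs: observe that $(\oM_g,\partial\oM_g)$ is a smooth toroidal stack locally modeled on $(\bA^1,0)^m\times\bG_m^{3g-3-m}$ near $[C]$, with cone generated by coordinate vectors indexed by the edges of $G(C)$; invoke the Picard--Lefschetz formula to identify the linear map on cocharacter lattices as $e_i\mapsto (e_i^*)^2$; and then apply the standard toroidal regularity criterion that a rational map between toroidal (stacks/)varieties is regular iff each cone of the source fan lands in a cone of the target fan, finishing with the remark that passing from stacks to coarse spaces is a finite Galois quotient and does not affect regularity. Your account is a more spelled-out version of exactly this: your ``monodromy cone generated by $S(G)$'' is the image of the source cone under the Picard--Lefschetz map, and your invocation of the valuative criterion on a polydisk is the content of the toroidal regularity criterion that the paper cites as a black box. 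Both directions of the iff come for free from that criterion, so your separate test-curve discussion for the ``only if'' direction, while not wrong, is not a genuinely different argument --- it is the standard proof of that criterion unwound one level.
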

\begin{proof}
  The stacks $(\oM_g,\partial\oM_g)$ and
  $(\oA_g\utau,\partial\oA_g\utau)$ are toroidal. The second one by
  definition, and the first one because it is a smooth stack of
  dimension $3g-3$ and the boundary divisors have normal crossings.

  Thus, in a neighborhood of a boundary point $[C]$ of $\oM_g$
  corresponding to a stable curve, $\oM_g$ is a toroidal stack
  modeled on $(\bA^1,0)^m \times \bG_m^{3g-3-m}$, where $m$ is the
  the number of edges of the dual graph $\Gamma$ of $C$, and $\bG_m$
  is the multiplicative group. This corresponds to a standard
  $m$-dimensional cone in $\bR^{3g-3}$ generated by the first $m$
  coordinate vectors which are in a bijection with the edges $e_i$ of
  $\Gamma$.

  By the Picard-Lefshetz monodromy formula, near the boundary the
  Torelli map is described by the linear map sending the vector $e_i$
  to $(e_i^*)^2\in N$. We conclude the proof by applying a well-known
  criterion of regularity for toroidal varieties saying that the
  rational map is regular iff every cone of the first fan maps into a
  cone in the second fan.

  The coarse moduli spaces are locally finite Galois quotients of
  appropriate toroidal neighborhoods for $\oM_g$, $\oA_g\vor$. The
  regularity of the rational map is unaffected by such Galois covers. Hence, 
  the result for the coarse moduli spaces is the same as for the stacks.
\end{proof}

\begin{lemma}\label{lem:regular-open}
  The map $\oM_g\dasharrow\oA_g\utau$ is regular on an open union of
  strata of~$\oM_g$.
\end{lemma}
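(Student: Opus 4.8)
The plan is to obtain this as an immediate corollary of Theorem~\ref{thm:gen-crit}. Recall that the strata of $\oM_g$ are exactly the loci of stable curves $[C]$ with a fixed dual graph $G(C)$ (up to isomorphism), and that Theorem~\ref{thm:gen-crit} describes the regular locus of the extended map entirely in terms of this combinatorial datum: the map is regular near $[C]$ if and only if $S(G(C))$ lies in some cone $\sigma\in\tau$. The one thing to notice is that this condition is \emph{constant on each stratum}. Indeed it refers to $G(C)$ only up to isomorphism; the surjection $\Lambda\onto H_1(G,\bZ)$, and hence the embedding $N(G)\hookrightarrow N=\Gamma^2\Lambda^*$, is determined only up to the action of $\GL(g,\bZ)$ on $N_\bR$, and $\tau$ is $\GL(g,\bZ)$-invariant by definition; so whether ``$S(G)\subset\sigma$ for some $\sigma\in\tau$'' holds does not depend on any of these choices.

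Granting this, let $U\subset\oM_g$ be the locus of points near which the extended Torelli map is regular. This set is open: if the map is regular on some open neighborhood of $[C]$, the same neighborhood witnesses regularity near each of its points; equivalently, $U$ is the domain of definition of a rational map from the smooth (in particular normal) stack $\oM_g$ to the proper stack $\oA_g\utau$, and $\M_g\subset U$. By Theorem~\ref{thm:gen-crit} together with the previous paragraph, the condition $[C]\in U$ depends only on the stratum containing $[C]$; hence for every stratum $\M_G$ we have either $\M_G\subset U$ or $\M_G\cap U=\emptyset$. Thus $U$ is a union of strata, and being open it is an open union of strata, as claimed.

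There is essentially no obstacle here beyond recording the stratum-invariance of the criterion; for orientation one may also check directly that $U$ is closed under generization, as an open set must be. Contracting a non-loop edge $e_j$ of a dual graph $G'$ to obtain a less degenerate graph $G$ induces an isomorphism $H^1(G,\bZ)\xrightarrow{\sim}H^1(G',\bZ)$ under which $e_i^*\mapsto e_i^*$ for $i\ne j$; therefore $S(G)$ is carried into $S(G')$, and any cone of $\tau$ containing $S(G')$ also contains $S(G)$.
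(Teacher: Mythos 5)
Your proof is correct and lands on essentially the same facts the paper uses, though you organize them a little differently. The paper's proof phrases things in the language of closure relations between strata: it records that passing from a stratum to a stratum in its closure corresponds to a graph contraction $G\to G'$, that this induces $H_1(G',\bZ)\onto H_1(G,\bZ)$ and hence $S(G)\subset S(G')$, and that therefore the regular locus (characterized by Theorem~\ref{thm:gen-crit}) is stable under generization; since it is also a union of strata, it is open. You instead lead with two abstract observations --- the criterion of Theorem~\ref{thm:gen-crit} is stratum-invariant (it depends only on $G(C)$ up to the $\GL(g,\bZ)$-action, under which $\tau$ is invariant), and the domain of definition of a rational map is open by definition --- and deduce directly that the regular locus is an open union of strata. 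This packaging is slightly more conceptual and bypasses the combinatorics, but you then recover the paper's combinatorial check as your ``for orientation'' paragraph, so the two proofs are interchangeable in substance. One small point on that check: you only treat contraction of a non-loop edge $e_j$, where $H^1(G,\bZ)\to H^1(G',\bZ)$ is an isomorphism. When $e_j$ is a loop (a nonseparating self-node being smoothed), contraction drops the genus of the dual graph by one, and the map $H^1(G,\bZ)\to H^1(G',\bZ)$ is a split injection rather than an isomorphism, with $e_j^*$ spanning a complementary $\bZ$-summand; the inclusion $S(G)\subset S(G')$ still holds, so nothing breaks, but the statement ``induces an isomorphism'' is not quite right in general. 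Since this paragraph is explicitly a sanity check and not load-bearing, this is only a cosmetic inaccuracy.
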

\begin{proof}
  For two stable curves $C,C'$, the stratum of $C$ is in the closure
  of the stratum of $C'$ iff $G$ is a contraction of $G'$. Then
  $H_1(G',\bZ)\onto H_1(G,\bZ)$, the lattice $N(G)$ is a cotorsion
  sublattice in $N(G')$, and $S(G)\subset S(G')$.  Thus, if
  $S(G')\subset \sigma$ then $S(G)\subset \sigma$. $S(G)$ also lies in
  a cone of the induced fan on $N(G)_{\bR}$.
\end{proof}

As a first application, we reprove the following result of
Mumford and Namikawa, cf. \cite[\S18]{Namikawa_NewCompBoth}.

\begin{theorem}  \label{thm:crit-2nd-vor}
  The map $\oM_g\to\oA_g\vor$ is regular. 
\end{theorem}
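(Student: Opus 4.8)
The plan is to apply the General Criterion (Theorem~\ref{thm:gen-crit}): for each stable curve $[C]$ with dual graph $G$, we must exhibit a single cone $\sigma$ of the second Voronoi fan $\tau\vor$ containing the set $S(G) = \{\,{e_i^*}^2 : e_i^* \neq 0\,\}$. Since the cones of $\tau\vor$ are (by construction) indexed by Delaunay decompositions of $\Lambda_{\bR}/\Lambda$, the natural candidate is to produce one quadratic form $q$ on $H^1(G,\bZ)$ (equivalently a form on $N(G)$, after the cotorsion embedding $N(G)\hookrightarrow N$) whose Delaunay decomposition is "as degenerate as possible along all the coedge directions", so that every ${e_i^*}^2$ lies in the corresponding closed cone. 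Concretely, I would use the form naturally attached to the graph: take the standard positive definite form $q_G$ on $H_1(G,\bR)$ coming from assigning length $1$ to each edge (the form $q_G(x) = \sum_i x_i^2$ in the edge coordinates, restricted to cycles), and argue that the second Voronoi cone of the dual/associated form on $H^1(G,\bZ)$ contains all of $S(G)$.

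The key intermediate step is a combinatorial-geometric description of the relevant Delaunay cell. Using Lemma~\ref{lem:coedges-unimodular}, choose a spanning tree $T$ of $G$, so the coedges $e_i^*$, $i\in I$, form a $\bZ$-basis of $H^1(G,\bZ)$ and the dual basis $f_i$ of $H_1(G,\bZ)$ has the explicit form $f_i = e_i + \sum_{e_s\in T} b_{is}e_s$ with $b_{is}\in\{0,\pm1\}$. In these coordinates the "graph form" is $q = \sum_{i\in I}\bigl(x_i\bigr)^2$ plus cross terms dictated by the $b_{is}$, and one checks directly that its Delaunay polytope (the Voronoi-dual cell containing the origin) is the zonotope generated by the edge vectors --- this is precisely the classical fact that graphic (regular) forms sit in the second Voronoi fan and that their Delaunay decomposition is the "graphical" one. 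The point is then that a rank-one degeneration in the direction ${e_i^*}^2$ corresponds to collapsing the $i$-th generating segment of this zonotope, which does not change the Delaunay decomposition's combinatorial type until the segment has fully collapsed; hence each ray $\bR_{\ge0}{e_i^*}^2$, and therefore their sum, lies in the closure of the single Voronoi cone $\sigma$ of the graphic form $q_G$. This is the fact, essentially due to Namikawa, that the Voronoi (L-type) domain of a graphic form is exactly spanned by the ${e_i^*}^2$.

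By Lemma~\ref{lem:reduction} we may assume $G$ is cubic and bridgeless, which removes the degenerate cases (loops, bridges where $e_i^*=0$, repeated coedges) and makes the zonotope/Delaunay bookkeeping cleanest; and by Lemma~\ref{lem:regular-open} it suffices to treat the deepest strata, i.e.\ such $G$. Finally, Theorem~\ref{thm:gen-crit} upgrades "$S(G)\subset\sigma$ for some $\sigma\in\tau\vor$" to regularity of $\oM_g\dashrightarrow\oA_g\vor$ near $[C]$; ranging over all $[C]$ (equivalently, over all graphs of genus $\le g$) gives regularity everywhere, and since $\oM_g$ is proper the map is globally regular. I expect the main obstacle to be the precise identification of the Voronoi cone of a graphic form with the cone spanned by $S(G)$ --- that is, proving that no "accidental" change of Delaunay type occurs as one moves along $\sum_i t_i {e_i^*}^2$ for all $t_i\ge0$ simultaneously, which is where the combinatorics of graphic matroids (regularity/total unimodularity of the incidence structure, via the $b_{is}\in\{0,\pm1\}$ of Lemma~\ref{lem:coedges-unimodular}) really enters. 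Everything else is the formal toroidal-geometry machinery already set up in Theorem~\ref{thm:gen-crit}.
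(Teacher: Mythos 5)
Your overall strategy is the same as the paper's: apply Theorem~\ref{thm:gen-crit} and show that $S(G)$ lies in a single cone of $\tau\vor$ for each dual graph $G$.  Where you diverge is in how you propose to establish that single-cone statement.  You want to pick a particular positive definite form (``assign length one to each edge''), identify its Delaunay cell as a zonotope, and argue that degenerating in the directions ${e_i^*}^2$ does not leave its L-type domain --- and you yourself flag exactly this step as ``the main obstacle,'' which you do not prove.  The paper sidesteps all of that: it never analyzes any specific form's Delaunay decomposition.  Instead it invokes the Dicings Lemma, which gives a purely lattice-theoretic criterion --- $\sum\bR_{\ge 0}{v_i^*}^2$ is a 2nd Voronoi cone iff every linearly independent subset of the $v_i^*$ is a $\bZ$-basis of its saturated span --- and then verifies that criterion for the coedge system directly from Lemma~\ref{lem:coedges-unimodular}: a maximal linearly independent set of coedges is automatically a $\bZ$-basis of $H^1(G,\bZ)$, and any smaller independent set extends to one, so the quotient is free and the subset spans the saturation.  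This is the regularity of the cographic matroid, and once it is stated combinatorially the ``no accidental change of Delaunay type'' worry disappears.  A few smaller slips worth noting: you speak of a form on $H_1$ and then ask for the Voronoi cone of a ``dual form on $H^1$'' --- but $S(G)\subset\Gamma^2 H^1(G,\bZ)$ consists of rank-one quadratic forms on $H_1$, so the form you want in one cone together with $S(G)$ is $\sum{e_i^*}^2$ as a form on $H_1$, with no dualization; the appeal to Lemma~\ref{lem:reduction} is unnecessary here (that reduction is tailored to emms, and the cographic argument applies to any $G$ directly); and properness of $\oM_g$ is not what promotes pointwise regularity to global regularity --- being regular on a neighborhood of every point already does that.
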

\begin{proof}
  This immediately follows from Theorem~\ref{thm:gen-crit},
  Lemma~\ref{lem:coedges-unimodular}, and the following well known
  elementary fact about the ``dicing'' 2nd Voronoi cones,
  cf. \cite{ErdahlRyshkov_OnLatticeDicing}.
\end{proof}

\begin{lemma}[Dicings]
  Let $v^*_i\in\Lambda^*$, $i\in I$, be finitely many nonzero primitive
  vectors. 
  Then the following conditions are equivalent:
  \begin{enumerate}
  \item $\{ {v^*_i}^2 \}$ lie in the same 2nd Voronoi cone,
  \item $\sum \bR_{\ge0}\, {v^*_i}^2$ is a 2nd Voronoi cone,
  \item Any linearly independent subset $\{ {v^*_{j}} \}$, $j\in
    J\subset I$, is a $\bZ$-basis of $\Lambda^*\cap \sum \bR {v^*_{j}}
    $.
  \end{enumerate}
\end{lemma}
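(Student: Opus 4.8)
The plan is to prove the cycle of implications (2)$\Rightarrow$(1)$\Rightarrow$(3)$\Rightarrow$(2), with the bulk of the work going into (1)$\Rightarrow$(3), which is the characterization of when a collection of squared primitive vectors lies in a single $L$-type domain. First I would recall the relevant description of the 2nd Voronoi fan: a quadratic form $q$ lies in the open cone corresponding to a Delaunay decomposition $\mathcal{D}$ precisely when, for every Delaunay cell, the lattice points of that cell all lie on a common empty ellipsoid determined by $q$, and the cells tile $\Lambda^*_\bR$. The key reformulation, essentially due to Voronoi and made explicit in the ``dicing'' picture of Erdahl--Ryshkov, is that the rays $\{{v_i^*}^2\}$ span a face of the 2nd Voronoi fan exactly when the hyperplane arrangement given by the integer translates of the hyperplanes $\{v_i^* = k\}$, $k \in \bZ$, is a \emph{dicing}: that is, the vertices of the arrangement are exactly the points of $\Lambda^*$. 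I would take this combinatorial description as the working definition and show it is equivalent to (3).

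For (1)$\Rightarrow$(3): suppose $\{{v_i^*}^2\}_{i\in I}$ all lie in one 2nd Voronoi cone, so the associated arrangement of hyperplanes $\{v_i^* \in \bZ\}$ is a dicing with vertex set exactly $\Lambda^*$. Let $J \subseteq I$ index a maximal linearly independent subset; then the hyperplanes $\{v_j^* = 0\}$, $j \in J$, meet in a single point, namely the origin, and more generally $\{v_j^* = c_j\}_{j\in J}$ meet in a single point for any integers $c_j$. Because the arrangement is a dicing, every such intersection point must lie in $\Lambda^*$; conversely every point of $\Lambda^*$ is such an intersection point. This says precisely that the map $\Lambda^* \cap \mathrm{span}_\bR\{v_j^*\} \to \bZ^J$ sending $w \mapsto (v_j^*(w))_j$ is a bijection onto $\bZ^J$, which is exactly the statement that $\{v_j^*\}_{j\in J}$ is a $\bZ$-basis of $\Lambda^* \cap \sum \bR v_j^*$. (One must also note $\{v_j^*\}$ spans this sublattice over $\bR$ by construction, so the content is the unimodularity/saturation.)

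For (3)$\Rightarrow$(2): assuming every linearly independent subset of $\{v_i^*\}$ is a $\bZ$-basis of the lattice it generates, one checks directly that the hyperplane arrangement $\{v_i^* \in \bZ\}$ is a dicing — each vertex is cut out by a linearly independent subsystem with integer right-hand sides, and the $\bZ$-basis condition forces that vertex into $\Lambda^*$, while $\Lambda^*$ is covered because any $w \in \Lambda^*$ lies on $\{v_i^* = v_i^*(w)\}$ for all $i$. A dicing arrangement is, by the Erdahl--Ryshkov result, exactly a 2nd Voronoi cone, giving (2); and (2)$\Rightarrow$(1) is immediate since the interior of a 2nd Voronoi cone is a single $L$-type domain, so the generators ${v_i^*}^2$ (being rays of that cone) lie in it. I expect the main obstacle to be purely expository rather than mathematical: stating cleanly the equivalence between ``lies in a 2nd Voronoi cone,'' ``generates a 2nd Voronoi cone,'' and ``the associated hyperplane arrangement is a dicing,'' and citing it precisely, so that the genuinely short argument above about bases of saturated sublattices is not obscured. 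If one does not wish to invoke the dicing dictionary, the alternative is to argue directly with empty spheres and Delaunay cells, which is longer but elementary.
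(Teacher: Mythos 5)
The paper does not prove this lemma; it is stated without proof immediately after the proof of Theorem~\ref{thm:crit-2nd-vor}, where it is cited as a ``well known elementary fact'' with reference to \cite{ErdahlRyshkov_OnLatticeDicing}. So you are supplying a proof the authors chose to omit, and your overall strategy --- translate ``lies in a 2nd Voronoi cone'' into the Erdahl--Ryshkov dicing picture, and then identify the dicing condition with the unimodularity statement (3) --- is exactly the intended route, consistent with the paper's own Remark~\ref{rem:toroidal-Schottky}.

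However, there is a persistent dualization error in the central step (1)$\Rightarrow$(3) that needs fixing. In the paper's conventions $\Lambda$ is the primal lattice, $v_i^*\in\Lambda^*=\Hom(\Lambda,\bZ)$, the form $\sum\lambda_i(v_i^*)^2\in N_\bR$ is a quadratic form on $\Lambda_\bR$, and the Delaunay decomposition it determines is a decomposition of $\Lambda_\bR$ periodic w.r.t.\ $\Lambda$. Hence the hyperplanes $\{v_i^*=k\}$, $k\in\bZ$, live in $\Lambda_\bR$, and the dicing condition is that the vertex set of this arrangement is exactly $\Lambda$ --- not $\Lambda^*$, as you write several times. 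More seriously, the sentence ``the map $\Lambda^*\cap\operatorname{span}_\bR\{v_j^*\}\to\bZ^J$ sending $w\mapsto(v_j^*(w))_j$ is a bijection'' does not type-check: $v_j^*$ is a functional on $\Lambda$, not on $\Lambda^*$, so $v_j^*(w)$ for $w\in\Lambda^*$ is meaningless without an unwanted choice of inner product on $\Lambda_\bR$. The correct reformulation is on the other side of the duality: for a linearly independent subset $J$, the dicing condition says the evaluation map $\Lambda\to\bZ^J$, $x\mapsto(v_j^*(x))_{j\in J}$, is \emph{surjective}; dualizing, this is exactly the statement that $\bZ^J\to\Lambda^*$, $e_j\mapsto v_j^*$, is a split injection with saturated image, i.e.\ that $\{v_j^*\}$ is a $\bZ$-basis of $\Lambda^*\cap\sum_j\bR v_j^*$, which is (3). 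Once this is phrased on the correct side, the argument goes through, and (3)$\Rightarrow$(2)$\Rightarrow$(1) are as you describe. You should also fix the paraphrase at the start of (3)$\Rightarrow$(2): ``a $\bZ$-basis of the lattice it generates'' is vacuous; the content of (3) is that the generated lattice equals the saturation $\Lambda^*\cap\sum\bR v_j^*$.
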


\begin{remark}\label{rem:toroidal-Schottky}
  The systems of vectors in the above lemma are known by various
  names: totally unimodular systems of vectors, dicings, regular
  matroids. The ``dicing'' refers to the corresponding Delaunay
  decomposition of $\Lambda_{\bR}$ periodic w.r.t. $\Lambda$. It is
  given by ``dicing'' the vector space $\Lambda_{\bR}$ by the parallel
  systems of hyperplanes $\{v_i^*=n_i\in\bZ\}$.

  Seymour's classification theorem on regular matroids, which can be
  found in \cite{Oxley_Matroids}, says that all regular matroids are
  graphic, cographic, a special matroid $R_{10}$, or can be obtained
  from these by a sort of ``tensor product''.

  The regular matroids above, corresponding to $\{ e_i^* \in
  H^1(G,\bZ) \}$, are the cographic matroids. This gives the
  combinatorial description of the toroidal Torelli map. By
  \cite[4.1]{ErdahlRyshkov_OnLatticeDicing} every dicing 2nd Voronoi
  cone is simplicial. Thus, the open neighborhood of $\im\oM_g$ in
  the stack $\oA_g\vor$ corresponding to the cographic dicing cones
  has at worst abelian quotient singularities. 
\end{remark}

We now turn to the cases of perfect and central cones.

\begin{theorem}\label{thm:crit-perf}
  \begin{enumerate}
  \item The Torelli map $\M_g \dashrightarrow \oA_g\perf$ is regular
    in a neighborhood of a stable curve $[C]$ iff the dual graph
    $G(C)$ has an $\bR$-emm.
  \item Moreover, if every graph $G$ of genus $\le g$ has a strong
    $\bR$-emm then $\oA_g\perf$ and $\oA_g\vor$ share a common open
    neighborhood of the image of $\oM_g$.
  \end{enumerate}
\end{theorem}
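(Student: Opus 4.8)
The plan is to connect the combinatorial notion of a (strong) $\bR$-emm to the geometry of the two fans via Theorem~\ref{thm:gen-crit} and its refinement: regularity near $[C]$ is equivalent to finding a single cone of the fan containing $S(G)$, and the relevant cone will be the perfect cone $\sigma(q) = \sum_i \bR_{\ge0}\, {e_i^*}^2$ spanned by the squares of the coedges. First I would observe that an $\bR$-emm $q$ is, by Definition, a positive definite quadratic form on $H^1(G,\bR)$ whose shortest integral vectors include all the nonzero $\pm e_i^*$; by the dual-cone description of perfect cones in Section~\ref{sec:toroidal-comp}, this says precisely that $q \in \sigma^\vee$ and that $\sigma := \sigma(q)$ is a face of a perfect cone — so $S(G) \subset \sigma \in \tau\perf$, giving regularity of $\oM_g \dashrightarrow \oA_g\perf$ near $[C]$ (this is part (1), which part (2) presupposes). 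The extra strength in a strong emm is exactly that the shortest vectors are \emph{only} the $\pm e_i^*$, i.e. the dual cone $\sigma^\vee$ is cut out by the inequalities $(q, {e_i^*}^2)\le (q, {v^*}^2)$ and nothing else, so $\sigma$ is simultaneously a perfect cone face \emph{and}, by the Dicings Lemma applied to the coedges (recall the coedge squares span a cographic, hence regular, matroid as in Remark~\ref{rem:toroidal-Schottky}), a 2nd Voronoi cone — meaning the two fans agree on $\sigma$ and on all its faces.

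Next I would globalize this from a single curve to a neighborhood of $\im \oM_g$. By Lemma~\ref{lem:regular-open} the locus where the map to $\oA_g\perf$ is regular is an open union of strata, and by hypothesis (every genus-$\le g$ graph has a strong $\bR$-emm) this locus is all of $\oM_g$. So for every stable curve $[C]$ we get a cone $\sigma(G(C))$ which is a common cone of $\tau\perf$ and $\tau\vor$, and the toroidal charts of $\oM_g$ near $[C]$ map into the toric charts of both compactifications determined by this common cone. I would then assemble these cones into a common subfan $\tau_0$ of $\tau\perf \cap \tau\vor$: the cones $\sigma(G(C))$ and all their faces, closed under the $\GL(g,\bZ)$-action. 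The key compatibility to check is that when $G$ is a contraction of $G'$ (so the stratum of $C$ lies in the closure of that of $C'$), the cone $\sigma(G)$ is a face of $\sigma(G')$ — this follows because $S(G) \subset S(G')$ and the coedges of $G$ are a sub-collection of the coedges of $G'$, so $\sigma(G)$ is literally the face of $\sigma(G')$ spanned by the corresponding rays; hence the chosen cones fit together into a genuine fan and the union of the associated open toric pieces is an honest open substack of both $\oA_g\perf$ and $\oA_g\vor$ containing $\im\oM_g$.

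The one genuinely delicate point — the main obstacle — is verifying that $\tau_0$ really is a \emph{common open neighborhood}, i.e. that the open sets $U\perf \subset \oA_g\perf$ and $U\vor \subset \oA_g\vor$ built from $\tau_0$ are \emph{isomorphic} (not merely both receiving $\oM_g$). For this I would argue that a strong $\bR$-emm forces $\sigma(q)$ to be a \emph{maximal} cone of neither fan in general, but to be a cone on which the two fan structures coincide cone-by-cone; concretely, the strong condition says the set of minimal vectors of $q$ is exactly $\{\pm e_i^*\}$, and a cographic regular matroid's cone is simplicial (Remark~\ref{rem:toroidal-Schottky} via \cite[4.1]{ErdahlRyshkov_OnLatticeDicing}), so $\sigma(q)$ has the $e_i^*{}^2$ as its rays with no further subdivision in either fan — thus the stars of $\sigma(q)$ in $\tau\perf$ and in $\tau\vor$, \emph{restricted to the subfan generated by cographic cones}, are equal. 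Patching over all $G$, using that every cone of $\tau_0$ is a face of some $\sigma(G(C))$ and that face relations match on both sides (the previous paragraph), yields $\tau_0 \subset \tau\perf$ and $\tau_0 \subset \tau\vor$ as identical subfans, hence $U\perf \cong U\vor$; and openness is automatic since $\tau_0$ is a subfan containing a cone through which every boundary stratum of $\oM_g$ factors. I expect the write-up to spend most of its effort on this matroid-theoretic input — that the cone spanned by the coedge squares is the \emph{same} cell in both decompositions, with the same face lattice — while the descent from stacks to coarse spaces and the gluing of charts are routine by the arguments already used in Theorems~\ref{thm:gen-crit} and~\ref{thm:crit-2nd-vor}.
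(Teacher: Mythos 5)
Your proposal is correct and takes essentially the same route as the paper: part (1) is the definition of the perfect fan read against the definition of an $\bR$-emm, and part (2) reduces to the observation that a strong $\bR$-emm is exactly the certificate exhibiting the cographic second Voronoi cone $\sum_i \bR_{\ge0}{e_i^*}^2$ as a perfect cone. The paper's proof is terser — it simply leverages the already-established fact (Remark~\ref{rem:toroidal-Schottky}) that the cographic cones form a subfan of $\tau\vor$ whose open set $U$ contains $\im\oM_g$, then notes each such cone is also perfect; your extra worry about whether the two toric open sets built from the common subfan are isomorphic is unnecessary, since a subfan common to both fans automatically yields the same toric variety, so the ``star'' and simpliciality discussion can be dropped without loss.
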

\begin{proof}
  (1) By the description of the perfect fan given in
  Section~\ref{sec:toroidal-comp}, ${e_i^*}^2$ lie in a perfect cone
  iff they are edges of some perfect cone $\sigma$. This means that
  there exists a positive definite quadratic form $q$ such that the
  nonzero ${e_i^*}$ are among the shortest integral vectors
  w.r.t. $q$. This is our definition of an $\bR$-emm.

  (2) By the above, the strata of
  $\oA_g\vor$ corresponding to the cographic regular matroids gives a
  Zariski open neighborhood $U$ of the image of $\oM_g$. We want to
  show that each of these 2nd Voronoi cones is also a perfect
  cone. This means that there exists a $q>0$ such that the nonzero
  $\pm e_i^*$ are exactly the shortest integral vectors
  w.r.t. $q$. This is our definition of a strong $\bR$-emm.
\end{proof}

\begin{theorem}\label{thm:crit-cent}
  The Torelli map $\M_g \dashrightarrow \oA_g\cent$ is regular in a
  neighborhood of a stable curve $[C]$ iff the dual graph $G(C)$ has
  a $\bZ$-emm.
\end{theorem}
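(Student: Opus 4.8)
The plan is to combine the general regularity criterion of Theorem~\ref{thm:gen-crit} with the explicit description of central cones from Section~\ref{sec:toroidal-comp}, reducing the containment $S(G)\subset\sigma$ for some $\sigma\in\tau\cent$ to the existence of a $\bZ$-emm. By Theorem~\ref{thm:gen-crit}, the map $\M_g\dashrightarrow\oA_g\cent$ is regular near $[C]$ if and only if the finite set $S(G(C))$ of nonzero squares ${e_i^*}^2$ is contained in a single cone of $\tau\cent$. Since every cone of a fan is a face of a maximal one, it suffices to ask that $S(G)$ lie in some maximal-dimensional central cone $\sigma$.

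First I would unwind what it means for $S(G)$ to lie in a maximal central cone. Recall $\tau\cent$ is the dual (normal) fan of the infinite polyhedron $Q=\Conv\big(C\cap(M\setminus 0)\big)$, whose vertices are the central quadratic forms, which are \emph{integral} by definition. The maximal cone $\sigma_q$ dual to a vertex $q$ of $Q$ consists of those $B\in\oC$ with $(B,q)\le (B,q')$ for all other lattice points $q'\in C\cap(M\setminus 0)$ — equivalently, $B$ attains its minimum over $C\cap(M\setminus 0)$ at $q$. Using $(B,q)=q^{\vee}(B)$-type pairings and, more concretely, $({e_i^*}^2,q)=q(e_i^*)$, the condition ${e_i^*}^2\in\sigma_q$ for all nonzero $e_i^*$ says exactly that the integral form $q$ achieves its minimum value on the integral points $e_i^*$ — i.e., $q(e_i^*)=m:=\min\{q(v):v\in\Lambda^*\setminus 0,\ q>0\text{-relevant}\}$ for all coedges, and $q(v)\ge m$ for all nonzero integral $v$. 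After rescaling $q$ (a central cone does not change under the natural normalization, and in any case one can clear denominators), this is precisely the definition of a $\bZ$-emm: a positive definite \emph{integral} form $q\in\Sym^2 H_1(G)$ with $q(e_i^*)=1$ for every non-bridge edge and $q(v^*)\ge 1$ for all $v^*\in H^1(G,\bZ)\setminus 0$. Note the quadratic form lives on $H_1(G)$, and one extends it to $\Lambda$ via $\Lambda\onto H_1(G,\bZ)$ exactly as in the setup preceding Definition~\ref{def:set-squares}; bridges contribute $e_i^*=0$ and so impose no condition, matching clause~(2) of the emm definition.

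The two directions then go as follows. If $G$ has a $\bZ$-emm $q$, then $q$ (viewed on $\Lambda^*$, constant on the radical) has the coedges among its shortest nonzero integral vectors with common minimal value $1$; one checks that a positive multiple of $q$, or $q$ itself, is a vertex of $Q$ (it lies on $Q$ and is a lattice point minimizing the linear functional given by any $B$ in the relative interior of $\sum\bR_{\ge0}{e_i^*}^2$), hence the corresponding central cone $\sigma_q$ contains all the ${e_i^*}^2$, so $S(G)\subset\sigma_q$ and the map is regular by Theorem~\ref{thm:gen-crit}. Conversely, if $S(G)\subset\sigma$ for some cone $\sigma\in\tau\cent$, enlarge $\sigma$ to a maximal cone $\sigma_q$ with $q$ a central quadratic form; then $q$ is integral and satisfies $q(e_i^*)=q(e_j^*)$ for all coedges (all equal to the minimum of $q$ on $\Lambda^*\setminus 0$) and $q(v^*)\ge$ that minimum for all nonzero integral $v^*$ — after dividing by this common minimum (which a priori could make $q$ rational, but one then reasons that the central form can be chosen so this value is $1$, or one simply observes a $\bQ$-emm suffices and clears denominators) one gets a $\bZ$-emm on $H_1(G,\bZ)$.

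The main obstacle I anticipate is exactly this last normalization subtlety: a central quadratic form $q$ is guaranteed to be integral, but not that its minimum over nonzero integral vectors equals $1$, so the naive rescaling $q/m$ need not be integral. I would resolve this by arguing that for a vertex $q$ of $Q$ whose dual cone contains $\sum\bR_{\ge0}{e_i^*}^2$, the minimum $m$ is actually $1$ — because the coedges $e_i^*$ are primitive (they span a direct summand, cf. Lemma~\ref{lem:coedges-unimodular}) and any central form taking value $\ge 2$ on all of them would not be a vertex of $Q$, as one could subtract a suitable rank-one form ${a^*}^2$ and stay in $C$ with smaller linear-functional value. Making this precise — i.e., that "$S(G)$ in a central cone" $\iff$ "$\bZ$-emm" with the value-$1$ normalization automatic — is the only non-formal point; everything else is a direct translation through Theorem~\ref{thm:gen-crit} and the definition of $\tau\cent$.
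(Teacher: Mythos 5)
Your overall framework (apply Theorem~\ref{thm:gen-crit}, unwind the definition of $\tau\cent$, and reduce the value of the central form on coedges to $1$ using primitivity) matches the paper's, but there is a substantive misreading in the middle that you would need to correct. The condition ${e_i^*}^2\in\sigma_q$, where $\sigma_q$ is the maximal cone of the normal fan of $Q$ dual to a vertex $q$, does \emph{not} say that $q$ achieves its minimum over $\Lambda^*\setminus 0$ at the coedges $e_i^*$ --- that is the description of the \emph{perfect} cone fan, where the forms in $(\sigma^\vee)^0$ all have the same set of minimal vectors. The correct reading here is dual: ${e_i^*}^2\in\sigma_q$ means the linear functional $q'\mapsto ({e_i^*}^2,q')=q'(e_i^*)$ on $M_{\bR}$ attains its minimum over $Q$ at $q$, i.e.\ $q(e_i^*)\le q'(e_i^*)$ for every integral positive definite $q'\in M$. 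So the defining property of the central form $q$ is that it is the \emph{smallest integral positive definite form at the point $e_i^*$}, not that $e_i^*$ is a shortest vector of $q$; the latter is a consequence, not the definition. Since you then carry the wrong characterization forward into your ``$m=\min q$'' claim, that step does not follow from what precedes it.

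Once the characterization is stated correctly, the normalization worry that you flag is resolved cleanly and without the ``subtract a rank-one form ${a^*}^2$'' maneuver, which in any case does not typecheck (one has ${a^*}^2\in N$ while $q\in M$). The paper's argument is direct: because $e_i^*$ is primitive in $\Lambda^*$, complete it to a $\bZ$-basis and let $q'$ be the sum of squares of the dual basis vectors; then $q'$ is integral, positive definite, and $q'(e_i^*)=1$, so centrality of $q$ gives $q(e_i^*)\le q'(e_i^*)=1$, while integrality and positive definiteness give $q(e_i^*)\ge 1$. Thus $q(e_i^*)=1$ with no rescaling step at all, and condition (3) of the emm definition is automatic from integrality. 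For the converse direction, note also that extending a $\bZ$-emm to $\Lambda^*$ ``constant on the radical'' produces a degenerate form, which lies outside $Q=\Conv(C\cap M\setminus 0)$; you should instead extend it to a genuinely positive definite integral form on $\Lambda^*$ (possible since $H^1(G,\bZ)\hookrightarrow\Lambda^*$ is a cotorsion embedding and hence splits), after which $q(e_i^*)=1\le q'(e_i^*)$ for all $q'\in C\cap M$ shows each ${e_i^*}^2$ achieves its minimum over $Q$ at $q$, placing $S(G)$ in a single cone of $\tau\cent$.
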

\begin{proof}
  Applying Theorem~\ref{thm:gen-crit}, if the map is regular then $\{
  {e_i^*}^2 \}$ lie in the same central cone $\sigma$, which we can
  pick to be maximal-dimensional. The corresponding dual cone
  $\sigma^{\vee}$ is 1-dimensional and is spanned by a central form
  $q\in M$. This is an integral positive definite form characterized
  by the following property: for any $f\in\sigma$ and any other
  integral positive definite form $q'\in M$ one has $(q,f) \le
  (q',f)$.  Since every $e_i^*\ne0$ is a primitive vector in
  $\Lambda^*$, there exist a $q'$ with $(q',{e_i^*}^2) =
  q'(e_i^*)=1$. Therefore, $q(e_i^*)=1$ for all nonbridge edges $e_i$,
  and so $q$ is a $\bZ$-emm.

  Vice versa, if $q$ is a $\bZ$-emm of $G$ then $1=q(e_i^*) \le
  q'(e_i^*)$ for any $q'\in M\cap C\setminus 0$, so $\{ {e_i^*}^2 \}
  \subset \sigma$ for any cone $\sigma^{\vee}$ containing $q$.
\end{proof}

\begin{lemma}
  The subset of $\oM_g$ of curves admitting a $\bZ$-emm,
  resp. $\bR$-emm, is an open union of strata
  (cf. Lemma~\ref{lem:reduction}).
\end{lemma}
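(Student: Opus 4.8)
The plan is to show that the two subsets in question are (i) closed under passing to a curve whose dual graph is a contraction, and (ii) stable under small deformation of the curve within $\oM_g$, so that they are unions of strata that are moreover open. The first point is already essentially contained in Lemma~\ref{lem:regular-open} and its proof: if $G$ is a contraction of $G'$, then $H_1(G',\bZ)\onto H_1(G,\bZ)$, the lattice $N(G)$ sits as a cotorsion sublattice of $N(G')$, and $S(G)\subset S(G')$; hence restricting a $\bZ$-emm (resp. $\bR$-emm) $q'$ of $G'$ to $H_1(G,\bZ)$ produces a form that is still positive definite and still assigns length $1$ to every nonzero coedge $e_i^*$ of $G$, while the minimality condition $q(v^*)\ge 1$ on all of $H^1(G,\bZ)\setminus 0$ is inherited because $H^1(G,\bZ)$ embeds into $H^1(G',\bZ)$ compatibly with the forms. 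So admitting a $\bZ$-emm (resp. $\bR$-emm) is preserved under contraction, i.e.\ the locus is a union of strata.

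For openness, I would invoke the stratification picture used in the proof of Theorem~\ref{thm:gen-crit}: a neighborhood of $[C]$ in $\oM_g$ meets only the stratum of $C$ and strata of curves $C'$ whose dual graph $G'$ contracts to $G(C)$. Thus ``union of strata'' plus ``closed under contraction'' would give that the complement is also a union of strata and is closed under the reverse operation; combined with the local structure of $\oM_g$ near a boundary point this yields that the locus is open. Concretely: if $G(C)$ has a $\bZ$-emm (resp.\ $\bR$-emm), then by the contraction argument every $G'$ contracting to it also has one, so an entire neighborhood of $[C]$ lies in the locus. Equivalently, by Theorems~\ref{thm:crit-perf} and \ref{thm:crit-cent} these loci are precisely the loci where $\oM_g\dashrightarrow\oA_g\perf$ (resp.\ $\oM_g\dashrightarrow\oA_g\cent$) is regular, and regularity of a rational map is an open condition, which gives the statement immediately.

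The main subtlety is the claim that \emph{a $\bZ$-emm} (as opposed to merely an $\bR$-emm) on a contraction $G$ lifts or at least persists: one must check that the restriction of an integral form on $H_1(G',\bZ)$ to the cotorsion sublattice $H_1(G,\bZ)$ is still \emph{integral} on $H_1(G,\bZ)$. This holds because $H_1(G,\bZ)\hookrightarrow H_1(G',\bZ)$ is a split inclusion (the quotient $H_1(G',\bZ)/H_1(G,\bZ)$ is the homology of the contracted subgraph, which is free), so the restriction of an integral quadratic form is integral. Once this is in place the rest is formal: the locus is cut out by the conjunction of the contraction-closedness and the explicit description in Lemma~\ref{lem:regular-open}, hence it is an open union of strata exactly as claimed. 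I would write this up in a few lines, referring back to Lemma~\ref{lem:regular-open} for the $S(G)\subset S(G')$ mechanism and to Theorems~\ref{thm:crit-perf}(1) and \ref{thm:crit-cent} for the identification with the regularity locus.
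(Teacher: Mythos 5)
Your proposal is, at its core, the same one-line argument as the paper: if $G$ is a contraction of $G'$, then $H^1(G,\bZ)$ is a cotorsion sublattice of $H^1(G',\bZ)$, and the restriction of an emm on $G'$ is an emm on $G$; this gives openness because a neighborhood of $[C]$ meets only strata whose dual graphs are contractions \emph{of} $G(C)$. Two things in your write-up should be cleaned up. First, in the last paragraph you have $H_1$ and $H^1$ interchanged: an emm is a quadratic form on $H^1$, there is no natural inclusion $H_1(G,\bZ)\hookrightarrow H_1(G',\bZ)$ (the natural map is the surjection $H_1(G',\bZ)\twoheadrightarrow H_1(G,\bZ)$), and integrality of the restricted form needs no ``split inclusion'' argument at all: $q'\in\Sym^2 H_1(G',\bZ)$, the contraction induces a surjection $\Sym^2 H_1(G',\bZ)\twoheadrightarrow\Sym^2 H_1(G,\bZ)$, and this pushforward is exactly the restriction of the form along $H^1(G,\bZ)\hookrightarrow H^1(G',\bZ)$, so the image is automatically integral. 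Second, the phrase ``every $G'$ contracting to $G(C)$'' reads backwards; what you want is ``every contraction of $G(C)$.'' Your alternative remark — that by Theorems~\ref{thm:crit-perf} and~\ref{thm:crit-cent} these loci are precisely the regularity loci of the rational maps to $\oA_g\perf$ and $\oA_g\cent$, hence open by Lemma~\ref{lem:regular-open} — is a legitimate and arguably cleaner packaging, though it ultimately rests on the same contraction/restriction mechanism, so it is a repackaging rather than a genuinely different proof.
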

\begin{proof}
  Using the notations of the proof of Lemma~\ref{lem:regular-open},
  the restriction of an emm on $H^1(G',\bZ)$ to $H^1(G,\bZ)$ is an emm
  for $G$.
\end{proof}

\section{$\bZ$-emms and positive cycle 2-covers of graphs}
\label{sec:Z-emms}

\begin{lemma}
  Let $q$ be a $\bZ$-emm of a graph $G$. Then the lattice
  $(H^1(G,\bZ),2q)$ is a direct sum of the standard root lattices $A_n$,
  $D_n$ ($n\ge4)$, $E_n$ ($n=6,7,8$).

  If, in addition, $G$ has no loops and is 2-connected, then the root
  lattice $(H^1(G,\bZ),2q)$ is irreducible, i.e. it is a single copy
  of $A_n$, $D_n$, or $E_n$.
\end{lemma}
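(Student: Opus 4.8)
The plan is to recognize $L:=(H^1(G,\bZ),2q)$ as an even, positive definite lattice generated by vectors of norm $2$, and then invoke the classification of such lattices. Since $q\in\Sym^2 H_1(G,\bZ)$, it takes integer values on the dual lattice $H^1(G,\bZ)$, so the associated bilinear form $b(v,w)=q(v+w)-q(v)-q(w)$ is integral, with $b(v,v)=2q(v)\in 2\bZ$; together with $q>0$ this makes $L$ an even positive definite lattice. By Lemma~\ref{1conn-cohomology}(1) the coedges $e_i^*$ generate $L$, and $b(e_i^*,e_i^*)=2q(e_i^*)=2$ for every non-bridge edge, so $L$ is generated by a subset of its set $R$ of norm-$2$ vectors; $R$ is finite because $L$ is positive definite. (Note that condition (3) in the definition of an emm is automatic in this case: an integral positive definite form has $q(v^*)\ge1$ for every $v^*\ne0$.)

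Next I would verify that $R$ is a reduced crystallographic root system spanning $L_\bR$: it spans because the $e_i^*$ already do; for $\alpha,\beta\in R$ one has $\tfrac{2b(\alpha,\beta)}{b(\alpha,\alpha)}=b(\alpha,\beta)\in\bZ$ by integrality; the reflection $s_\alpha(\beta)=\beta-b(\alpha,\beta)\alpha$ lies in $L$ and again has norm $2$, so $s_\alpha(R)=R$; and $c\alpha\in R$ forces $2c^2=b(c\alpha,c\alpha)=2$, i.e. $c=\pm1$. By the classical classification of root systems, $R=\bigsqcup_k R_k$ with each $R_k$ irreducible and the spans $V_k$ pairwise orthogonal, $\bigoplus_k V_k=L_\bR$. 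Since all roots have the same length $\sqrt2$, each $R_k$ is simply laced, hence of type $A_n$ $(n\ge1)$, $D_n$ $(n\ge4)$, or $E_n$ $(n=6,7,8)$. As $L$ is the $\bZ$-span of $R=\bigsqcup_k R_k$ we get $L=\bigoplus_k L_k$ with $L_k:=\bZ\langle R_k\rangle$ the root lattice of the corresponding type (and then automatically $L_k=V_k\cap L$). This is the first assertion; the rank-$1$ case $A_1=(\bZ,\,2x^2)$ is precisely what a loop component of $G$ contributes.

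For the second assertion, assume $G$ is loopless and $2$-connected. Every root of $\bigoplus_k L_k$ lies in a single summand: writing $\alpha=\sum_k v_k$ with $v_k\in L_k$, the identity $2=b(\alpha,\alpha)=\sum_k b(v_k,v_k)$ has nonnegative even summands, so exactly one equals $2$ and the rest vanish, forcing (positive definiteness) all but one $v_k$ to be $0$. In particular each nonzero coedge $e_i^*$ lies in a unique summand $L_{k(i)}$; grouping the edges by $k(i)$ shows that each $L_k$ is generated by the coedges it contains, and hence each summand contains at least one coedge (as $L_k\ne0$). If there were more than one summand we could therefore split the edges into two nonempty groups $I_1\sqcup I_2$ with $H^1(G,\bZ)=\langle e^*_{i_1}\rangle\oplus\langle e^*_{i_2}\rangle$, contradicting Lemma~\ref{1conn-cohomology}(3). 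Hence $L$ has a single summand, i.e. it is irreducible.

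The only non-routine input is the classification of (reduced, crystallographic) root systems, equivalently of root lattices, which is classical (Bourbaki; see also Conway--Sloane or Ebeling); everything else is bookkeeping. The two points to handle with care are the conventions on the ranges of $n$ — one restricts to $D_n$ for $n\ge4$ and $E_n$ for $n=6,7,8$ precisely to avoid coincidences such as $D_3\cong A_3$, $D_2\cong A_1\oplus A_1$, $E_5\cong D_5$, $E_4\cong A_4$, $E_3\cong A_2\oplus A_1$ — and the standard fact, used to pass from $R_k$ to $L_k$, that an irreducible simply-laced root system is determined by its type, and hence so is its root lattice.
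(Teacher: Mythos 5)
Your proof is correct and follows the same route as the paper's: recognize $(H^1(G,\bZ),2q)$ as an even positive-definite lattice generated by its norm-$2$ vectors (the coedges), invoke the ADE classification of simply-laced root lattices, and derive irreducibility by producing a partition of the edges that would contradict Lemma~\ref{1conn-cohomology}(3). You have merely spelled out, in welcome detail, the two steps the paper leaves implicit — that a root of a direct sum lies in a single summand, and that the resulting grouping of coedges yields the forbidden partition of edges.
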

\begin{proof}
  The bilinear form $2q$ is integral, symmetric, positive definite,
  and even. The set $R$ of vectors $r$ with $r^2=2$ spans $H^1(G,\bZ)$
  since it contains $e_i^*$. Thus, $R$ is a simply laced root system,
  which must be a direct sum of $A_n,D_n,E_n$ by a standard
  classification.

  If $(H^1(G,\bZ),2q)$ is not irreducible then it splits as a direct
  sum $\langle e^*_{i_1} \rangle \oplus \langle e^*_{i_2}\rangle$ for
  some partition $I_1\sqcup I_2$ of edges. Then $G$ is not 2-connected
  loopless by Lemma~\ref{1conn-cohomology}.
\end{proof}

Recall that a graph is called \emph{projective planar} if it admits
an an embedding into $\bR\bP^2$.  
The main result of this section is the following.

\begin{theorem}\label{thm:AnDn}
  Let $G$ be a 2-connected loopless graph. Then
  \begin{enumerate}
  \item $G$ has a $\bZ$-emm of type $A_g$ $\iff$ $G$ is planar.
  \item For $g\ge4$, $G$ has a $\bZ$-emm of type $D_g$ $\iff$ $G$ is
    projective planar.
  \end{enumerate}
  Moreover, the directions $\Leftarrow$ hold for any graph.
\end{theorem}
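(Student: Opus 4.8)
The plan is to analyze when the lattice $(H^1(G,\bZ),2q)$ for a $\bZ$-emm $q$ can be a specific root lattice, and to relate the combinatorial structure of that root system to embeddings of $G$ into surfaces. Let me sketch both directions.

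\medskip

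\textbf{The directions $\Leftarrow$ (constructing emms).} Suppose $G$ is planar. Fix a planar embedding; then the coedges $e_i^*$ are naturally identified with a spanning set for $H^1(G,\bZ)\cong H_1(G^*,\bZ)$, where $G^*$ is the planar dual. Concretely, if we pick a spanning tree $T$ of $G$, the edges not in $T$ give a $\bZ$-basis of $H^1$ by Lemma~\ref{lem:coedges-unimodular}, and the full set $\{\pm e_i^*\}$ of nonzero coedges is (up to sign) the set of roots of an $A_g$ root system realized inside $H^1(G,\bZ)$: each fundamental cycle of $G^*$ is a sum of consecutive basis vectors with coefficients $0,\pm1$, and the incidence pattern of the faces of the planar embedding forces exactly the $A_g$ relations. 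Then $q = $ the quadratic form whose Gram matrix is $\tfrac12$ the Cartan matrix of $A_g$ is a $\bZ$-emm: it is positive definite, even with $2q$ the $A_g$ form, the $e_i^*$ are roots so $q(e_i^*)=1$, and the minimal vectors of $A_g$ all have norm $2$ under $2q$, so $q(v^*)\ge1$ for all nonzero $v^*$. For the $D_g$ case with $G$ projective planar, one does the same using an embedding in $\bR\bP^2$: now the dual ``graph on $\bR\bP^2$'' produces a system of vectors which is a $D_g$ root system, the extra root of type $D$ coming from the nonorientability/the $\bZ/2$ in $H_1(\bR\bP^2)$. Again $2q = $ the $D_g$ Cartan form gives the required emm. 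The key point in both cases is that one must exhibit the coedges explicitly as a root subsystem and check the minimal-vector condition (3) of the emm definition, which for $A_g$ and $D_g$ reduces to the known fact that all minimal vectors are roots.

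\medskip

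\textbf{The directions $\Rightarrow$ (recognizing planarity).} Suppose $G$ is $2$-connected, loopless, and has a $\bZ$-emm $q$ with $(H^1(G,\bZ),2q)\cong A_g$. By the preceding lemma this root lattice is irreducible, and the nonzero $\pm e_i^*$ are a subset of the roots of $A_g$ spanning the lattice. I would then argue: a set of roots of $A_g$ that contains, for a suitable Weyl-group element, a simple system, and whose ``fundamental cycle'' structure (the vectors $f_i$ dual to a spanning-tree basis, which have coordinates $0,\pm1$ by Lemma~\ref{lem:coedges-unimodular}) matches the cographic matroid of $G$, forces the cographic matroid of $G$ to be \emph{graphic as well} — equivalently, $G$ is planar by Whitney's theorem (a graph is planar iff its cographic matroid is graphic, iff its cycle matroid is cographic). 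The cleanest route is via matroids: realizing $\{e_i^*\}$ inside the $A_g$ root lattice is exactly realizing the cographic matroid $M^*(G)$ as the cographic matroid of the graph whose edges are the simple roots chain — i.e., $M^*(G) \cong M^*(\text{path-like graph})$, no: more precisely the $A_g$ root system is the cographic matroid of a cycle/theta-type graph, so membership in $A_g$ forces $M^*(G)$ to be graphic, hence $G$ planar. For $D_g$ one argues similarly that being a sub-root-system of $D_g$ (spanning, with the right matroid) is equivalent to $M^*(G)$ embedding as a ``cographic matroid of a graph on the projective plane'', which by a theorem characterizing projective-planar graphs via their cographic matroids (or directly via excluded minors $K_5,K_{3,3}$ versus the larger forbidden list for $\bR\bP^2$) gives projective planarity.

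\medskip

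\textbf{Main obstacle.} The hard direction is $\Rightarrow$, and specifically pinning down the matroid-theoretic statement ``the cographic matroid of $G$ is a (spanning) submatroid of the $A_g$, resp.\ $D_g$, root system $\iff$ $G$ is planar, resp.\ projective planar.'' For $A_g$ this should follow from the classical fact that the root system $A_{n}$ (as a matroid on its positive roots, with the dependency structure) is the \emph{graphic} matroid of $K_{n+1}$, so $\{e_i^*\}\subset A_g$ means the cographic matroid of $G$ is a submatroid of a graphic matroid; combined with $2$-connectedness and a rank/regularity count via Seymour's theorem (Remark~\ref{rem:toroidal-Schottky}) this forces $M^*(G)$ itself graphic, i.e.\ $G$ planar. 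For $D_g$ the analogous statement — that sitting inside the $D_g$ root system corresponds exactly to projective planarity — is more delicate: $D_g$ is not a graphic matroid, and one needs the precise combinatorial model of $D_g$'s minimal vectors ($\pm e_i \pm e_j$) as ``signed-graphic'' or as the cographic matroid of a graph cellularly embedded in $\bR\bP^2$, plus the fact that a $2$-connected graph embeds in $\bR\bP^2$ iff its cographic matroid admits such a representation. Getting this equivalence exactly right, rather than just one inclusion, is where the real work lies; I expect to need the theory of embeddings of graphs in surfaces (the $\bR\bP^2$ analogue of Whitney's planarity criterion) together with an explicit identification of the coedges with a signed-graph realization inside $D_g$.
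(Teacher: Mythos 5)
Your direction $\Rightarrow$ takes a genuinely different route from the paper. The paper writes the $A_g$ (resp.\ $D_g$) form explicitly as a sum of $g+1$ (resp.\ $g$) squares of integral linear forms, shows the resulting cycles $c_k$ pair to $\{0,\pm1\}$ with every $e_i^*$ (else $q(e_i^*)\ge 2$), concludes that $\{c_k\}$ is a cycle 2-cover, and then invokes Theorem~\ref{thm:positive-cover} to get an embedding in $S^2$ or $\bR\bP^2$. This one topological lemma handles $A_g$ and $D_g$ uniformly. Your matroid-theoretic route is different, and for $A_g$ it actually works cleanly and arguably more transparently: the nonzero $e_i^*$ are minimal vectors, hence roots of $A_g$; the dependency matroid on the $A_g$ roots is the graphic matroid $M(K_{g+1})$; the cographic matroid $M^*(G)$ is the matroid on $\{e_i^*\}$, i.e.\ a restriction of $M(K_{g+1})$; graphic matroids are closed under restriction; so $M^*(G)$ is graphic and Whitney gives planarity. (The appeal to Seymour's decomposition and a ``rank/regularity count'' is unnecessary here.)

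The real gap is the $D_g$ case of $\Rightarrow$, and you correctly flag it. The root matroid of $D_g$ is \emph{not} graphic --- it is a signed-graphic (even-cycle) matroid --- so the one-line argument above does not transfer. The theorem you would need is of the form ``the cographic matroid of a $2$-connected graph $G$ is a restriction of the $D_g$ root matroid iff $G$ is projective planar.'' Such a statement is essentially Slilaty's characterization of projective-planar graphs via signed-graphic cographic matroids, which is a substantial piece of matroid structure theory (and in its usual form requires $3$-connectivity, so you would also have to handle the $2$-connected case separately). Absent that input, your $D_g$ argument does not close. By contrast the paper never needs any matroid representability theorem: the sum-of-squares decomposition of the $D_g$ form already hands you a positive cycle 2-cover, and Theorem~\ref{thm:positive-cover} turns that directly into a cellular embedding in a surface of Euler characteristic $\ge 1$.

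Your $\Leftarrow$ direction is the same idea as the paper's (faces of a planar or projective-planar embedding give the cycles $c_k$, with $2q=\sum c_k^2$ being the $A_g$ or $D_g$ form and $e_i^*=c_{k_1}^*-c_{k_2}^*$ a root), but it is left at the level of a sketch. In particular the claim that the face-incidence pattern ``forces exactly the $A_g$ relations'' needs the explicit identification $H^1(G,\bZ)\cong\{(n_k)\in\bZ^{g+1}:\sum n_k=0\}$ that the paper gives, and the $D_g$ case genuinely requires the passage to the $2{:}1$ cover $S^2\to\bR\bP^2$ to identify the lattice as $D_g$ rather than merely waving at ``nonorientability/$\bZ/2$.'' Also note the paper separately treats the degenerate subcase where the preimage $G'$ of $G$ in $S^2$ is disconnected (i.e.\ $G$ is already planar), which your sketch omits.
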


\begin{remark}
  The direction $\Leftarrow$ of (1) is due to Namikawa
  \cite[Prop.5]{Namikawa_ExtendedTorelli}.
\end{remark}

\begin{corollary}
  Every graph of genus $g\le 6$ admits a $\bZ$-emm.
\end{corollary}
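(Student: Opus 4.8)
The plan is to deduce the corollary from Theorem~\ref{thm:AnDn} together with the reduction lemmas in Section~\ref{sec:graphs}. First I would invoke Lemma~\ref{lem:reduction} and the preceding lemma on irreducible components: it suffices to produce a $\bZ$-emm for each cubic bridgeless graph $G'_k$ of genus $a\le g\le 6$ arising from the irreducible components of the dual graph. (The genus can only drop under the operations in Lemma~\ref{lem:reduction}, since inserting an edge into a high-valence vertex does not change $H_1$, and deleting valence-$2$ vertices does not either; so we stay in genus $\le 6$.) A simple loop has the obvious $\bZ$-emm $x^2$, so we may assume $G$ is loopless and $2$-connected.

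Next I would split into cases by genus. For $g\le 3$, every $2$-connected loopless graph is planar — a $2$-connected graph of genus $\le 3$ has at most, say, few enough edges that $K_5$ and $K_{3,3}$ cannot occur as minors; more cleanly, a loopless $2$-connected graph of genus $g$ has $g+1-n$ edges... rather, one checks directly that the Kuratowski obstructions $K_5$ (genus $6$) and $K_{3,3}$ (genus $4$) both have genus $\ge 4$, hence cannot be minors of a graph of genus $\le 3$, so such a graph is planar and Theorem~\ref{thm:AnDn}(1) supplies an $A_g$-type $\bZ$-emm. For $g=4,5,6$ I would argue that every $2$-connected loopless graph of genus $\le 6$ is projective planar: the minor-minimal obstructions to embedding in $\bR\bP^2$ (the $35$ graphs of the Glover–Huneke–Wang / Archdeacon list) all have genus substantially larger than $6$ — indeed the smallest of them, such as $K_{3,5}$, $K_5 \cup K_5$, and the various relatives, have first Betti number well above $6$ — so no graph of genus $\le 6$ can contain one as a minor. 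Hence Theorem~\ref{thm:AnDn}(2) applies and gives a $\bZ$-emm of type $D_g$ for $g=4,5,6$, while genus $\le 3$ is handled by part (1).

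The main obstacle is the verification that no projective-planarity obstruction has genus $\le 6$. This is where I would need to be careful: rather than quoting the full list of $35$ forbidden minors, it is cleaner to use that a graph embeddable in $\bR\bP^2$ satisfies no better bound than $m \le 3n - 3$ for the edge count, and that a minor-minimal non-projective-planar graph has no vertices of valence $\le 2$ (they could be suppressed) and is $2$-connected; combining Euler-characteristic bounds for such graphs with the genus constraint $g = m - n + 1 \le 6$ forces $n$ and $m$ small enough that one can enumerate directly, or simply cite that all known obstructions have $g\ge 7$. An even safer route, if one wants to avoid topological graph theory altogether, is to note that the corollary for $g\le 6$ also follows by combining part (1) for the planar case with an explicit finite check in the small non-planar-but-projective-planar range; but I expect the cleanest writeup is the minor-obstruction argument above, with the genus lower bounds for $K_5$, $K_{3,3}$, and the projective-plane obstructions recorded explicitly. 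Once that combinatorial input is in place, the corollary is immediate from Theorem~\ref{thm:AnDn}.
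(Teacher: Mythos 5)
Your overall strategy — reduce to $2$-connected loopless graphs, then quote Theorem~\ref{thm:AnDn} together with Kuratowski-type obstruction lists to establish (projective) planarity — is exactly the paper's approach. But there is a concrete gap in the key combinatorial claim.

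You assert that all the minor-minimal (or topologically-minimal) obstructions to projective-planar embeddability have first Betti number $\ge 7$, so that no graph of genus $\le 6$ can contain one as a minor. This is false: as the paper records, Archdeacon's list of $103$ irreducible graphs for the projective plane contains exactly one graph $G_1$ of genus $6$ (and, since the first Betti number is minor-monotone, there is correspondingly a minor-minimal obstruction of genus $\le 6$ as well). Consequently, a $2$-connected loopless graph $G$ of genus exactly $6$ need not be projective planar — it can be a subdivision of $G_1$ — and Theorem~\ref{thm:AnDn}(2) does not apply to it. The paper closes this case separately, by a direct (and, it warns, lengthy) computation producing a $\bZ$-emm for $G_1$ by hand; this computation is genuinely needed, and a $\bZ$-emm for $G_1$ then induces one on any subdivision since subdividing edges leaves the set of coedges $\{e_i^*\}$ in $H^1$ unchanged. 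Your writeup would need to (i) correct the claim about the obstruction genera, and (ii) include the $G_1$ computation or some replacement for it; the ``even safer route'' you gesture at (a finite check in the small range) is in fact the route that is actually unavoidable here, not an optional simplification. A secondary, cosmetic point: you cite the $35$ minor-minimal obstructions while the paper works with the $103$ topologically-minimal ones; either list works once the genus bound is stated correctly, but be consistent about which minimality notion you mean. Also note that the operations in Lemma~\ref{lem:reduction} \emph{preserve} the genus rather than dropping it, which is all you need but is stated imprecisely.
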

\begin{proof}
  The well-known Kuratowski theorem says that a graph is not planar
  iff it contains a subgraph homeomorphic to $K_5$ or $K_{3,3}$. There
  is a similar theorem of Archdeacon 
  \cite{Archdeacon_Thesis, Archdeacon_ProjPlane}
  for projective planar graphs which has a much
  longer list of 103 minimal counterexamples. All of those graphs have
  genus $\ge7$, except for a single graph $G_1$. The existence of a
  $\bZ$-emm for $G_1$ can be easily established by a direct, although
  quite lengthy, computation.

  We note that 
  \cite{Archdeacon_Thesis, Archdeacon_ProjPlane}
  is concerned with graphs without loops and and multiple edges. But a
  loop just adds a single $\bZ$ summand to $H_1(G,\bZ)$, and the
  multiple edges do not affect (projective) planarity.
\end{proof}

\begin{remark}
  By extending this method, \cite{Vigre_Genus7} proves the existence
  of a $\bZ$-emm for every graph of genus 7 and 8. This amounts to
  checking all the cubic genus 6 and 7 graphs from Archdeacon's list
  and the cubic graphs obtained from them by adding one or two edges.
\end{remark}

\begin{corollary}
  For any $g\ge9$, there exists a graph $G$ of genus $g$ which does
  not admit a $\bZ$-emm.
\end{corollary}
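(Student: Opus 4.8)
The plan is to combine Theorem~\ref{thm:AnDn} with the lemma identifying the lattice $(H^1(G,\bZ),2q)$ of a $\bZ$-emm with a root lattice, so that the whole statement reduces to exhibiting, for each $g\ge9$, a single $2$-connected loopless graph of genus $g$ that is not projective planar.

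So suppose $G$ is a $2$-connected loopless graph of genus $g\ge9$ carrying a $\bZ$-emm $q$. By the root-lattice lemma, $(H^1(G,\bZ),2q)$ is an irreducible simply laced root lattice of rank $g$; since $E_n$ occurs only for $n=6,7,8$, it must be of type $A_g$ or $D_g$. By Theorem~\ref{thm:AnDn}(1) the $A_g$ case forces $G$ to be planar, and by Theorem~\ref{thm:AnDn}(2) (applicable since $g\ge4$) the $D_g$ case forces $G$ to be projective planar. As planarity implies projective planarity, a $\bZ$-emm can exist on such a $G$ only if $G$ is projective planar. It therefore suffices to produce non-projective-planar $2$-connected loopless graphs of every genus $g\ge9$.

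For this I would start from $K_{4,4}$: it has $16$ edges and $8$ vertices, hence genus $16-8+1=9$; it is $2$-connected and loopless; and it is not projective planar, since a simple bipartite graph embedded in $\bR\bP^2$ has all faces of length $\ge4$ and the surface has Euler characteristic $1$, forcing $|E|\le 2|V|-2$, whereas $16>14$. This handles $g=9$. For $g>9$, let $G_g$ be $K_{4,4}$ with $g-9$ additional edges placed parallel to one fixed edge; then $G_g$ is still $2$-connected and loopless, has genus exactly $g$, and contains $K_{4,4}$ as a subgraph, so it too fails to embed in $\bR\bP^2$. By the previous paragraph no $G_g$ admits a $\bZ$-emm, which proves the corollary; combined with Theorem~\ref{thm:crit-cent} it shows that $\oM_g\dashrightarrow\oA_g\cent$ is not regular for $g\ge9$.

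There is no serious obstacle here: the statement is essentially a formal consequence of the structural results already in place, and the only real content is the choice of example. The one point needing a little care---rather than a genuine difficulty---is a clean verification that $K_{4,4}$ (and hence $G_g$) does not embed in $\bR\bP^2$; the bipartite face-length count above does this directly and avoids any appeal to Archdeacon's list. If one instead wants $G_g$ to be cubic, in line with Lemma~\ref{lem:reduction}, one can replace it by a cubic graph of the same genus containing a subdivision of $K_{4,4}$, at the cost of a slightly less transparent non-embeddability argument.
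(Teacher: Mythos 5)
Your proof is correct and follows essentially the same approach as the paper: reduce via the root-lattice lemma and Theorem~\ref{thm:AnDn} to exhibiting a non-projective-planar $2$-connected loopless graph of genus $g\ge9$, then pad with parallel edges for $g>9$. The only difference is the choice of explicit example: the paper takes a genus-$9$ graph built around a minimal obstruction from Archdeacon's list, whereas you use $K_{4,4}$ together with a direct bipartite Euler-characteristic count, which is a somewhat more self-contained verification of non-embeddability in $\bR\bP^2$.
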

\begin{proof}
  Since $\bZ$-emms of type $E_n$ only appear for $g=6,7,8$, in genus $g\ge9$
  it is sufficient to take any 2-connected loopless graph containing a
  graph from Archdeacon's list
  \cite{Archdeacon_Thesis,Archdeacon_ProjPlane} as a subgraph. For
  example, the graph of genus 9 in Figure~\ref{fig:no-emm} contains
  the minimal nonplanar graph of genus 6 from
  \cite{Archdeacon_Thesis,Archdeacon_ProjPlane}:
\begin{figure}[h]
 \begin{tikzpicture}
  [scale=.7,auto=left]

  \node[vert] (n1) at (1,4) {};
  \node[vert] (n2) at (3,6)  {};
  \node[vert] (n3) at (5,4)  {};
  \node[vert] (n4) at (3,2) {};
  \node[vert] (n5) at (3,4)  {};
  \node[vert] (n7) at (8,4) {};
  \node[vert] (n8) at (10,6) {};
  \node[vert] (n9) at (12,4) {};
  \node[vert] (n10) at (10,2) {};
  \node[vert] (n11) at (10,4) {};

  \node[vert] (v1) at (6.5,7) {};
  \node[vert] (v2) at (6.5,5) {};
  \node[vert] (v3) at (6.5,1) {};

  \draw[-](n1) to [out=90,in=180]  (n2);
  \draw[-] (n2) to [out=0,in=90]  (n3);
  \draw[-] (n3) to [out=-90,in=0]  (n4);
  \draw[-] (n4) to [out=180,in=-90]  (n1);

  \draw[-] (n7) to [out=90,in=180]  (n8);
  \draw[-] (n8) to [out=0,in=90]  (n9);
  \draw[-] (n9) to [out=-90,in=0]  (n10);
  \draw[-] (n10) to [out=180,in=-90]  (n7);

  \draw[-] (n1) --  (n5);
  \draw[-] (n5) --  (n3);
  \draw[-] (n7) --  (n11);
  \draw[-] (n11) -- (n9);

  \draw[-] (n2) to [out=30,in=180] (v1) [out=0,in=150] to (n8);
  \draw[-] (n5) to [out=30,in=180] (v2) [out=0,in=150] to (n11);
  \draw[-] (n4) to [out=-30,in=180] (v3) [out=0,in=-150] to (n10);

  \draw[-] (v1) to (v2) to (v3) to [out=60,in=-60] (v1);
\end{tikzpicture}
\caption{A graph of genus 9 without a $\bZ$-emm}
\label{fig:no-emm}
\end{figure}
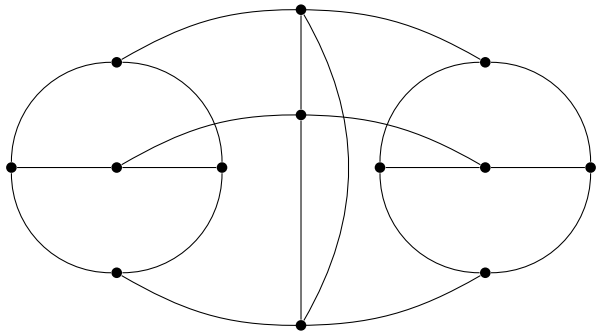
  For $g>9$, one can obtain $G$ from it by adding $g-9$ edges.
\end{proof}

Joe Tennini pointed out to us that the list in
\cite{Archdeacon_Thesis,Archdeacon_ProjPlane} contains a graph with 7
vertices. This implies that the complete graph $K_n$ is not projective
planar for $n\ge7$ and does not admit a $\bZ$-emm since $g(K_7)=15$.

  \bigbreak

The basic idea of our proof of Theorem~\ref{thm:AnDn} is the
following. We can assume that $G$ is bridgeless, by contracting the
bridges. A \emph{cycle 2-cover} of $G$ is a collection of cycles $c_k$
such that every edge appears in $\cup c_k$ exactly twice.  (A long
standing conjecture of Szekeres-Seymour says that every bridgeless
graph has such a cycle 2-cover. We don't need the validity of this
conjecture for our proof).

Now let us say $\{ c_k \}$ is a cycle 2-cover, and consider the
quadratic form $q=\frac12 \sum c_k^2$. Then $q$ is integral and since
every $e_i^2$ appears in $q$ with coefficient 1, one has $q(e_i^*) =
(q, {e_i^*}^2 ) =1$. However, in general $q$ is only positive \emph{semi}
definite. 

\begin{definition}
  A cycle 2-cover $\{c_k,\ k=1,\cdots N\}$ of a graph is called
  \emph{positive} if the quadratic form $q=\frac12\sum_{k=1}^N c_k^2$ is
  positive definite.
\end{definition}

Cycle 2-covers are closely related to embeddings of graphs into closed
topological surfaces. 
Given an embedding $G\into S^2$, resp. $G\into\bR\bP^2$, we will
construct a $\bZ$-emm of type $A_g$, resp. $D_g$, on $H^1(G,\bZ)$.
Then we will prove the converse by
using the fact that the quadratic forms $A_n$ and $D_n$ can be written
as sums of $\ge n$ squares of integral linear forms.

So let $\{c_k\}$ be a cycle 2-cover. Divide each $c_k$ into a sum
of simple (not repeating vertices) cycles $\dl$.  For each $\dl$,
take a copy $\Dl$ of a 2-disk and identify its boundary with $\dl$. 
Glue these disks along the edges $e_i$. The result is a closed surface
$X$ which may have isolated singular points at some
vertices, as follows.

For a vertex $v$, consider a simple cycle $\dl$ 
   in the given 2-cover that goes through $v$. We constructed $X$ by
   gluing the boundary of a disk $D_{\ell}$ to $d_{\ell}$. In $X$ there
   is a neighboring disk that also contains $v$; continue from disk to
   disk until you have made a full circle around $v$. 
If these are \emph{all} the $\dl$ containing $v$ then $X$ is
smooth at $v$. In general, there will be several such full circles,
and so $X$ is obtained from a smooth closed surface $\wX$ by gluing
together several points to such bad vertices $v$.

\begin{theorem}\label{thm:positive-cover}
  Let $G$ be a loopless 2-connected graph admitting a cycle double
  cover $\{c_k\}$. Then $\{c_k\}$ is positive $\iff$ $X=S^2$ or
  $\bR\bP^2$.
\end{theorem}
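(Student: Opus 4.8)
The plan is to translate positivity of $\{c_k\}$ into a homological statement about $X$ and then classify the pseudo-surfaces that can occur. It is convenient to assume from the start that $\{c_k\}$ consists of simple cycles, so $\dl=c_k$; this is harmless in the applications (where $\{c_k\}$ will be the set of face boundaries of an embedding), and in any case positivity of $q=\tfrac12\sum c_k^2$ forces the $\dl$ to span $H_1(G;\bR)$, since $\langle c_k\rangle_\bR\subseteq\langle\dl\rangle_\bR$. The first step is to write down the CW structure on $X$: its $1$-skeleton is $G$ and its $2$-cells are the disks $\Dl$ attached along the $\dl$. So, with real coefficients, $C_1(X)=C_1(G)$, $C_0(X)=C_0(G)$, the map $\partial_1$ is the boundary map of the graph, $Z_1(X)=H_1(G;\bR)$, and $B_1(X)=\langle\dl\rangle_\bR$, whence $H_1(X;\bR)\cong H_1(G;\bR)/\langle\dl\rangle_\bR$. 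On the other hand $q(v^*)=\tfrac12\sum_k v^*(c_k)^2$ vanishes exactly when $v^*$ annihilates every $c_k$, so---the pairing between $H_1(G;\bR)$ and $H^1(G;\bR)$ being perfect---$q$ is positive definite if and only if the $\dl$ span $H_1(G;\bR)$, i.e.\ if and only if $H_1(X;\bR)=0$. The theorem is thereby reduced to the assertion that, for $G$ loopless and $2$-connected, $H_1(X;\bR)=0$ if and only if $X$ is $S^2$ or $\bR\bP^2$.

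Next I would analyze the pseudo-surface $X=\wX/{\sim}$, where $\wX$ is a finite disjoint union of closed surfaces and $\sim$ collapses a finite point-set, one orbit for each bad vertex $v$ (at which the incident disks form $r_v\ge 2$ full circles). Writing $c$ for the number of components of $\wX$ and $I=\sum_v(r_v-1)$ for the total number of point identifications, a cellular / Mayer--Vietoris bookkeeping gives $b_1(X;\bR)=b_1(\wX;\bR)+(I-c+1)$, with both summands nonnegative (connectedness of $X$ forces $I\ge c-1$). Hence $H_1(X;\bR)=0$ forces both $b_1(\wX;\bR)=0$---so every component of $\wX$ is $S^2$ or $\bR\bP^2$---and $I=c-1$.

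Then I would bring in the hypotheses on $G$. If $c\ge 2$ the $I=c-1$ identifications glue the components of $\wX$ along a tree pattern, so $X$ is a chain of spheres and projective planes joined at points; such an $X$ has a cut point, which is a vertex $v$ of $G$, so that $G\setminus v$ is disconnected---impossible, $G$ being $2$-connected. (Looplessness enters only in ensuring that $2$-connected $\iff$ no cut vertex and that the link of every vertex is a finite union of circles.) Therefore $c=1$ and $I=0$: there are no bad vertices, $X=\wX$ is a single closed surface, and $b_1(X;\bR)=0$ leaves only $X=S^2$ or $\bR\bP^2$. The converse is immediate, since $H_1(S^2;\bR)=H_1(\bR\bP^2;\bR)=0$ (the second because $H_1(\bR\bP^2;\bZ)=\bZ/2$ is torsion), so the $\dl$ span $H_1(G;\bR)$ and $q$ is positive definite.

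I expect the genuinely delicate point to be the implication ``$G$ is $2$-connected $\Rightarrow$ $\wX$ is connected'', equivalently ``$X$ has no cut point'': one must verify that deleting a vertex $v$ disconnects $X$ exactly when it disconnects $G$, which should follow from the fact that $X\setminus v$ deformation retracts onto a $2$-complex built on $G\setminus v$. The remaining ingredients---the chain-level computation of $H_1(X)$, the identification of $\wX$ with the smooth model described before the theorem, and the bookkeeping formula for $b_1(X)$---are routine but must be set up carefully.
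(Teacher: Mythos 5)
Your proof is correct in outline and lands on the same two closed surfaces, but it organizes the argument differently from the paper and defers the one genuinely nontrivial step. The paper's route in the forward direction is: (a) rule out singular points of $X$ by a short normalization argument---if $v$ is a bad vertex, normalize $X$ there; by $2$-connectedness the preimage $G'$ of $G$ is still connected, has the same edges but strictly more vertices, hence $g(G')<g(G)$, so $H_1(G';\bR)\subsetneq H_1(G;\bR)$; but every $c_k$ lifts to $G'$, so $q$ vanishes on the annihilator of $H_1(G')$, contradicting $q>0$; (b) once $X$ is a smooth closed surface, compute $\chi(X)=N-E+V=N-g+1$, use $N\ge g$ (forced by positivity) to get $\chi(X)\ge1$, and invoke the classification of surfaces. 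Your route instead reformulates positivity as $H_1(X;\bR)=0$ via cellular homology, then classifies pseudo-surfaces with vanishing first Betti number using the bookkeeping $b_1(X)=b_1(\wX)+(I-c+1)$, and finally uses $2$-connectedness to force $c=1$ and $I=0$. The two are close in spirit---both hinge on $2$-connectedness excluding pinch points and on a Betti-number/Euler-characteristic count---but the paper's step (a) is noticeably shorter than your pseudo-surface analysis plus the cut-point lemma you flag at the end. That lemma (a cut point of $X$ must be a cut vertex of $G$) is true and provable via the surjection $\pi_0(G\setminus v)\to\pi_0(X\setminus v)$, but you leave it as an acknowledged gap, whereas the paper's normalization trick sidesteps the need for it entirely. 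One further small caution: your opening reduction ``$q>0$ $\iff$ the $\dl$ span $H_1(G;\bR)$'' is only an implication $\Rightarrow$ in general (the converse requires $c_k=\dl$); you flag this by assuming the $c_k$ simple at the outset, and the paper makes the same tacit identification, so this is not a defect relative to the paper, but it is worth stating explicitly.
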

\begin{proof}
  Assume that $q>0$. First, we claim that $X$ has no singular points,
  i.e. that $X=\wX$. Assuming the opposite, let $X'$ be the surface
  obtained from $X$ by normalizing at a single singular point $v$, and
  let 
  $G'$ be the preimage
  of $G$ in $X'$. Since $G$ is 2-connected, $G'$ is a connected
  graph. Since $G'$ has the same edges as $G$ but more vertices,
  we have $g(G')<g(G)$, and $H_1(G')\subset H_1(G)$ is a proper
  subspace. But $q$ is the same sum of squares of elements of $H_1(G')$,
  so it can not be positive definite.

  Next, the Euler characteristic of the smooth surface $X$ is 
  \[ \chi(X) = N - E + V = N- (E-V+1) + 1 = N-g +1 \]
  Since $q$ is positive definite, $N\ge g$. Hence, $\chi(X)\ge
  1$. There are only two smooth closed surfaces with $\chi\ge1$: $S^2$
  ($\chi=2$, $N=g+1$) and $\bR\bP^2$ ($\chi=1$, $N=g$).

  Now, for the opposite direction. If $X=S^2$ then $G$ divides the
  sphere into $g+1$ regions with boundaries 
  $c_k$. These obviously generate $H_1(G,\bZ)$, so
  $q>0$. 

  In the case $X=\bR\bP^2$, let $\pi:S^2\to\bR\bP^2$ be the 2:1 cover,
  and let $G'=\pi\inv(G)$. 
  Because $\pi_*H_1(G',\bR)= H_1(G,\bR)$ and the cycles on $S^2$
  generate $H_1(G',\bR)$, the cycles $c_k$ generate
  $H_1(G,\bR)$. Hence, $\sum c_k^2$ is positive definite. 
\end{proof}

\begin{proof}[Proof of Theorem~\ref{thm:AnDn}]
  (1) Let $G$ be an arbitrary graph with an embedding $G\subset S^2$.
  If $G$ is not connected, we add bridges to make it connected. Then
  the set $S^2\setminus G$ is a union of $g+1$ regions bounded by the
  cycles $c_k$ which can be given compatible orientations.  Then
  $H_1(G,\bZ)= \bZ^{g+1}/\bZ\sum c_k$, and the dual lattice
  $H^1(G,\bZ)$ is the hyperplane $\{ (n_k) \in \bZ^{g+1} \mid \sum n_k
  =0 \}$.  The quadratic form $2q=\sum c_k^2$ is the restriction of
  the standard Euclidean form on $\bZ^{g+1}$ to $H^1(G,\bZ)$. This is
  the standard definition of the $A_g$ lattice.  Each nonbridge edge
  $e_i$ belongs to precisely two 2-cells $k_1,k_2$. Then
  $\pm e_i^*=c_{k_1}^*-c_{k_2}^*$, where $\{c_k^*\}$ is the Euclidean
  basis of $\bZ^{g+1}$.

  Vice versa, suppose that a 2-connected loopless graph $G$ has a
  $\bZ$-emm of type $A_g$.  Note that the quadratic form of $A_g$ is a
  sum of $g+1$ squares of integral linear functions:
  \begin{eqnarray*}
    2q = 2 \sum_{i=1}^g x_i^2 - 2 \sum_{i=1}^{g-1} x_i x_{i+1} =
    x_1^2 + (x_1-x_2)^2 + \dots (x_{g-1}-x_g)^2 + x_g^2
  \end{eqnarray*}
  Consider the $g+1$ cycles $c_k$ in $H_1(G,\bZ)$ corresponding to
  these linear terms. We claim that for each edge $e_i$ of $G$ we have
  $c_k(e_i^*)=0$ or $\pm1$. Indeed, if $|c_k(e_i^*)|\ge 2$ then for
  $q=\frac12\sum_k c_k^2$ we have $q(e_i^*)\ge 2$, which contradicts
  our assumption $q(e_i^*)=~1$.  Thus, for each edge there exist
  exactly two cycles with $c_k^2(e_i^*)=1$, the collection of $c_k$ is
  a cycle 2-cover.  By the proof of the previous
  Theorem~\ref{thm:positive-cover} the corresponding ambient surface
  is $S^2$.

  (2) Let $G\subset \bR\bP^2$ be an arbitrary graph. Again, we make it
  connected, if necessary, by adding bridges.  Let $G'\subset S^2$ be
  the preimage under the 2:1 cover $S^2\to\bR\bP^2$.  

  If $G'$ is disconnected then it has two components both isomorphic
  to $G$. Then $G$ is planar, (1) applies, and an embedding $G\into
  S^2$ defines a $\bZ$-emm of type $A_g$. Consider the $g+1$ regions
  $S^2\setminus G$ and the corresponding cycles $c_k$. Since $g+1\ge
  5$ and the complete graph $K_5$ is non-planar, there exist two
  cycles $c_{k_1}$, $c_{k_2}$ which do not share an edge. Then the
  quadratic form $\frac12\sum_{k\ne k_1,k_2} c_k^2 + \frac12(c_{k_1}- c_{k_2})^2$
  is a $\bZ$-emm of $G$, and it is easy to check that it has type $D_g$.

  Otherwise, $G'$ is a connected graph of genus $2g-1$. 
  As above, $H^1(G',\bZ)$ together with the quadratic form
  $\sum_{k=1}^{2g}c_k^2$ is a root system of type $A_{2g-1}$ in its
  realization as the hyperplane $\{ \sum n_k=0 \}$ in $\bZ^{2g}$.

  On the homology, the antipodal involution $\iota$ can be written as
  $c_k \mapsto -c_{k+g}$, where we set $c_m := c_{m-2g}$ if
  $m>2g$. The image of $H_1(G',\bZ)$ in $H_1(G,\bZ)$ is the projection
  of $H_1(G',\bZ)$ onto the $(+1)$-eigenspace $H_1^+(G',\bR)$, and can
  be identified with the standard Euclidean $\bZ^g$ with the basis
  $c_1,\dots,c_g$. The homology group $H_1(G,\bZ)$ is the
  $\bZ_2$-extension of it obtained by adding vector
  $\frac12(1,\dots,1)$.

  Thus, the dual lattice $(H^1(G,\bZ),2q)$ can be identified with the
  sublattice of $\bZ^g$ of integral vectors with even sum of
  coordinates. This is the standard definition of the $D_g$ lattice.

  For the opposite direction, note that the quadratic form for $D_g$
  is a sum of $g$ squares:
  \begin{eqnarray*}
    2q &=& 2 \sum_{i=1}^g x_i^2 
    -2x_1x_3 - 2x_2x_3 
    - 2 \sum_{i=3}^{g-1} x_i x_{i+1} =\\
    &=& 
    (x_1+x_2-x_3)^2 + (x_1-x_2)^2
    + (x_3-x_4)^2 + \dots (x_{g-1}-x_g)^2 + x_g^2
  \end{eqnarray*}
  This gives $g$ cycles $c_k$ and thus a positive double cover by $g$
  or $g+1$ simple cycles $\dl$, which defines an embedding of $G$ into
  $S^2$ or $\bR\bP^2$, if $G$ is 2-connected and loopless.  Finally,
  if $G$ is planar then it is moreover projective planar.
\end{proof}

\section{Existence of $\bR$-emms}
\label{sec:R-emms}

Recall that by Lemma~\ref{lem:reduction} it sufficient to construct
$\bR$-emms for cubic graphs.
We begin by characterizing coedges. The following simple lemma will be useful:

\begin{lemma}\label{cycle_lemma}
  If $G$ is a connected bridgeless graph with the property that every edge
  is contained in a two element cutset then $G$ is cyclic.
\end{lemma}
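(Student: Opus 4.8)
The plan is to show that the hypothesis forces every vertex of $G$ to have degree $2$; since $G$ is connected and bridgeless, a connected $2$-regular graph is exactly a cycle, which is the desired conclusion. (The single vertex with a loop, and two vertices joined by two edges, are the degenerate cycles and are dealt with by hand.) Throughout I read ``two element cutset'' as ``minimal edge cut of size $2$'', so that an edge $e$ lying in such a cutset $\{e,f\}$ means $G\setminus\{e,f\}$ is disconnected while $G\setminus e$ and $G\setminus f$ are not.

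The heart of the argument is the following sub-claim, which is where the hypothesis is really used: among all two-element cutsets $\{e,f\}$ of $G$ and all connected components $A$ of $G\setminus\{e,f\}$, choose one with $|A|$ minimal; then $A$ is a single vertex, necessarily of degree $2$. To prove this I would argue by contradiction. First observe that a minimal such $A$ is itself bridgeless: a bridge $g$ of $A$ splits $A$ into $A_1\sqcup A_2$, and since $G$ is bridgeless the edges $e,f$ must attach on opposite sides, say to $A_1$ and $A_2$; then $\{g,e\}$ is a two-element cutset of $G$ whose small side is $A_1\subsetneq A$, contradicting minimality. Now if $A$ contained any edge $g$, it would lie in a two-element cutset $\{g,h\}$ by hypothesis, and I would case on the location of $h$: if $h\in\{e,f\}$ or $h$ lies in the other side $B$, then $A\setminus g$ is still connected (as $A$ is bridgeless) and reconnects to everything through $f$, so $G\setminus\{g,h\}$ is connected — a contradiction; if $h$ also lies in $A$, then, using bridgelessness of $G$ to pin down how $g,h$ meet $A_1,A_2$ and $e,f$, one finds that $\{g,h\}$ is a two-element cutset of $G$ with a component strictly smaller than $A$, again contradicting minimality. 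Hence $A$ has no edges, so $A=\{v\}$ and its only two edges are the cut edges $e,f$, giving $\deg v=2$.

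To finish I would induct. Smooth out the degree-$2$ vertex $v$ just produced — replace $v$ and its two edges by a single edge $g_0$ — to obtain a smaller connected bridgeless graph $G'$; note that $G'$ is a cycle iff $G$ is. The main obstacle is that the hypothesis is not visibly inherited: $g_0$ need not lie in a two-element cutset of $G'$ (this fails precisely when $\{e,f\}$ was the unique two-element cutset through $e$). I would circumvent this by working with the whole \emph{series class} of $e$ — the set of edges $e'$ with $\{e,e'\}$ a two-element cutset, which is an equivalence class by cocircuit exchange among cocircuits — and contracting the entire maximal path of degree-$2$ vertices it determines in one step, so that the edge glued back is again forced into a two-element cutset; alternatively one inducts on the number of distinct two-element cutsets rather than on $|E(G)|$. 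Either way the process terminates at a connected bridgeless graph with no degree-$2$ vertex satisfying the hypothesis, which by the sub-claim can only be trivial, so the original $G$ was a cycle. (Under the alternative reading in which \emph{every} cutset containing a given edge must have two elements, the argument collapses: the star at any vertex $w$ is a cutset containing every edge at $w$, so $\deg w=2$ for all $w$ and $G$ is immediately a cycle.)
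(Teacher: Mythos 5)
Your sub-claim --- that a minimal component $A$ of a two-element cutset must be a single vertex of degree $2$ --- is correct, and your proof of it (minimality forces $A$ bridgeless, then case analysis on the partner edge $h$) is sound modulo some details. The paper takes a different route, via a ``necklace'' decomposition of $G$ by a maximal series class and induction on the gems; your local degree-$2$-vertex argument is arguably cleaner at that stage.

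The real problem is the inductive finish, and you have in fact put your finger on exactly the spot where it breaks, but the proposed repairs do not work. Take $G=K_{2,3}$ with parts $\{a,b\}$ and $\{1,2,3\}$. It is connected and bridgeless, and every edge lies in a two-element cutset (the pair of edges at any of $1,2,3$). Your sub-claim produces, say, the degree-$2$ vertex $1$ with edges $a1,b1$. But the series class of $a1$ is exactly $\{a1,b1\}$, so the ``entire maximal path of degree-$2$ vertices it determines'' is just $a\!-\!1\!-\!b$, and contracting it in one step is the same as the single suppression you were trying to avoid: the resulting graph $G'$ is a $4$-cycle $a\!-\!2\!-\!b\!-\!3\!-\!a$ with a chord $ab$, and the new edge $ab$ lies in \emph{no} two-element cutset of $G'$. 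So the edge glued back is not ``forced into a two-element cutset,'' and $G'$ simply fails the hypothesis; inducting on the number of two-element cutsets doesn't help for the same reason, since the induction hypothesis cannot be invoked on $G'$.

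Worse, $K_{2,3}$ shows the gap is not patchable as stated: it satisfies every hypothesis of the lemma yet is not a cycle (vertices $a,b$ have degree $3$). So under the reading ``two-element cutset $=$ minimal edge cut of size $2$'' and ``cyclic $=$ a cycle graph,'' the lemma is false. (Your alternative reading, ``every cutset through a given edge has two elements,'' does make the statement trivially true, but it is not what the paper uses later: there one only produces \emph{some} two-element cutset through each edge.) Any correct proof here has to either change the statement or pin down a different meaning of ``cyclic''; the inductive scheme you set up cannot close as written.
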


\begin{proof}
  Suppose, as the induction hypothesis, $G$ is a minimal counterexample. Take any
  edge $e$ and a cutset $\{e,f\}$ containing it. This exhibits $G$ as a "cycle"
  $[e,G_a, f, G_b]$ with $G_a$ and $G_b$ disjoint and joined only by $e$ and
  $f$. Enlarge this cycle to exhibit $G$ as a larger "cycle", or "necklace"
  consisting of a chain of "gems" $G_1, G_2, \dots G_r$ connected cyclically by
  single edges.  Further assume that this necklace is maximal, so that no
  $G_i$ contains a $G_i$-bridge. Then by the induction hypothesis, each
  $G_i$ must be cyclic. Since we assumed $G$ was not cyclic, one of
  them contains an edge. That edge is not in any 2-element $G$-cutset,
  contradicting the choice of $G$.
\end{proof}

We can now give the promised characterization of coedges:

\begin{definition}
  A cycle in a graph $\Gamma$ will be called a \emph{$(0,1)$-cycle} if
  all directed edges appear in it with coefficients in $\{+1, 0,
  -1\}$; that is, the cycle is a sum of simple cycles with disjoint
  edge supports.
\end{definition}

\begin{lemma}\label{coedge_lemma}
  A nonzero cocycle $z \in H^1(\Gamma,\bZ)$ is a coedge $\iff$ $z(c) \in
  \{+1, 0, -1\} $ for all $(0,1)$-cycles $c$.
\end{lemma}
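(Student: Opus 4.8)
The plan is to prove the two directions of Lemma~\ref{coedge_lemma} separately, exploiting the combinatorial characterization of coedges already implicit in Lemma~\ref{lem:coedges-unimodular}.

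\textbf{The easy direction ($\Rightarrow$).} Suppose $z = \pm e_i^*$ is a coedge. For a $(0,1)$-cycle $c = \sum_j \dl[j]$, a sum of simple cycles with disjoint edge supports, we have $z(c) = \sum_j e_i^*(\dl[j])$. Since the $\dl[j]$ have disjoint edge supports, at most one of them contains the edge $e_i$, and for a simple cycle $\dl$ the value $e_i^*(\dl)$ is the coefficient of $e_i$ in $\dl$, which is $0$ or $\pm 1$. Hence $z(c) \in \{+1,0,-1\}$. This is essentially immediate and I expect no obstacle here.

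\textbf{The hard direction ($\Leftarrow$).} Suppose $z \in H^1(\Gamma,\bZ)$ is nonzero and $z(c) \in \{+1,0,-1\}$ for every $(0,1)$-cycle $c$; we must show $z$ is a coedge. First I would reduce to the bridgeless case: bridges contribute $e_i^* = 0$ and lie in no cycle, so they are irrelevant, and contracting them changes neither $H^1$ nor the set of $(0,1)$-cycles. Next, observe that the hypothesis in particular forces $z(f) \in \{0,\pm 1\}$ for every simple cycle $f$; write $E_z$ for the set of edges $e_i$ on which $z$ takes a nonzero value in the sense that $e_i$ appears with nonzero coefficient in the presentation of $z$ relative to some spanning tree --- more intrinsically, I want to identify the ``support'' of $z$. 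The key idea is to show that the edges where $z$ is ``active'' form a cutset of size at most $2$, or rather that $z$ is forced to be supported, up to sign, on a single edge. I would argue as follows: if $z$ is not a coedge, then by Lemma~\ref{lem:coedges-unimodular} and the structure of $H^1$, the ``support'' of $z$ is not a single edge. Using the $\{0,\pm1\}$ condition on cycles, I would show that every edge in the support of $z$ is contained in a two-element cutset of the subgraph spanned by the support (two edges $e, f$ in the support with the property that some simple cycle meets both; if $z(e) = \pm z(f)$ on all such cycles, then $e,f$ can be ``merged''). Then Lemma~\ref{cycle_lemma} applies: the support subgraph, being bridgeless with every edge in a two-element cutset, is a cycle, i.e.\ $z$ up to coboundary is (the restriction of) a single simple cycle's dual --- but then $z$, evaluated against that very cycle, would have to equal the number of support edges, contradicting $z(c)\le 1$ unless the support has one edge. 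Hence $z = \pm e_i^*$.

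\textbf{Main obstacle.} The crux is making precise the notion of the ``support'' of a cocycle $z$ and proving that every support edge lies in a $2$-element cutset of the support subgraph, so that Lemma~\ref{cycle_lemma} can be invoked. The natural definition is: fix a spanning tree $T$, write $z = \sum_i z(f_i)\, e_i^*$ over the non-tree edges via the dual basis of Lemma~\ref{lem:coedges-unimodular}, but this is basis-dependent; the right object is the minimal set of edges $e$ such that $z$ lies in the image of $H^1$ of the subgraph on those edges, equivalently the set of edges $e$ with $z \ne 0$ in $H^1(\Gamma/(\Gamma\setminus e))$ type considerations. I expect the delicate point is the inductive step where, given two support edges that always receive equal (or opposite) $z$-values on simple cycles, one shows this forces a $2$-element cutset structure --- this is a connectivity argument about how cycles thread through the support subgraph, and getting the signs and orientations consistent (choosing coherent orientations of the simple cycles so that ``$(0,1)$-cycle'' means genuine disjoint-support sums) is where the real work lies. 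Once that is in place, Lemma~\ref{cycle_lemma} closes the argument cleanly.
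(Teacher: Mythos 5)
Your $\Rightarrow$ direction is fine. The $\Leftarrow$ direction, however, has a genuine gap, and it sits precisely where you flag the ``main obstacle'' --- you do not resolve it, and I don't think it can be resolved in the form you propose without essentially rebuilding the paper's inductive machinery.

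The paper's proof is an induction on the number of edges: after reducing to the $3$-edge-connected case (contracting parallel edges), it deletes a single edge $e_1$, applies the inductive hypothesis to the restriction $f(z)\in H^1(\Gamma\setminus e_1)$, and deduces $z=n e_1^*+e_2^*$; then the $(0,1)$-cycle condition forces $n\in\{-1,0,1\}$, reducing to $z=e_1^*+e_2^*$. At that point one deletes a further arbitrary edge $e$ and invokes induction \emph{again} to get, for each $e$, a four-term relation $e_1^*+e_2^*=e_3^*+ke^*$, hence a four-element cutset $\{e_1,e_2,e,e_3\}$; it is \emph{this} that produces a $2$-element cutset through every edge of $\Gamma\setminus\{e_1,e_2\}$, so Lemma~\ref{cycle_lemma} applies. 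Your plan skips exactly this engine. You want to pass directly from ``$z$ satisfies the $(0,1)$-cycle bound but is not a coedge'' to ``every edge of the support subgraph lies in a $2$-element cutset of that subgraph,'' but: (a) your ``support'' of a cohomology class is not well-defined --- a class in $H^1=C^1/\mathrm{im}\,d$ has many cochain representatives with different supports, and ``the image of $H^1$ of the subgraph'' doesn't parse, since restriction goes $H^1(\Gamma)\to H^1(\text{subgraph})$, not the other way; and (b) the step ``if $z(e)=\pm z(f)$ on all such cycles then $e,f$ can be merged'' is an assertion, not an argument --- $z$ is a functional on $H_1$, so $z(e)$ means nothing, and the claim that this forces a $2$-cutset is exactly the content that the paper extracts from two nested applications of the inductive hypothesis. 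Finally, your concluding line ``$z$ evaluated against that very cycle would equal the number of support edges'' conflates cycles with cocycles; once $\Gamma\setminus\{e_1,e_2\}$ is known to be cyclic, the contradiction in the paper comes from building a $(0,1)$-cycle \emph{in $\Gamma$} through both $e_1$ and $e_2$ on which $z=e_1^*+e_2^*$ evaluates to $2$, not from pairing $z$ against the cycle graph itself.

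So: the destination (Lemma~\ref{cycle_lemma}) is the right one and is shared with the paper, but the road you sketch to get there is missing its load-bearing step, and you would need to reintroduce the deletion-and-induct argument to supply it.
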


\begin{proof}
  Clearly a coedge satisfies this condition, so we need to prove the
  converse. 

  We can assume that $\Gamma$ is 2-edge connected, i.e. connected and
  bridgeless. In a bridgeless graph, all edges are divided into equivalence
  classes by $e\sim e'$ iff $e^*=\pm {e'}^*$. By contracting all but one edge
  in each equivalence class, we can assume that $\Gamma$ is 3-edge connected.

  We will proceed by induction on the number of edges in
  $\Gamma$.  Choose some edge $e_1$ of $\Gamma$. We form a new graph
  $\Gamma\setminus e_1$ by deleting $e_1$.  
  We note that
  $(0,1)$-cycles in $\Gamma\setminus e_1$ are $(0,1)$-cycles in
  $\Gamma$, and we have the natural pullback map $f: H^1(\Gamma,\bR)
  \to H^1(\Gamma\setminus e_1,\bR)$, so $f(z)$ satisfies the
  conditions of the lemma and by induction is a coedge if it is nonzero. But
  $\ker(f)=e_1^*$, so either $z=ne_1^*$ (in which case we immediately have
  $z=\pm e_1^*$, a coedge) or $z=ne_1^*+e_2^*$. In this case we claim that $n
  \in \{-1, 0, 1\}$. To show this we exhibit a simple cycle $c$ in $\Gamma
  \setminus e_2$ containing $e_1$. This is certainly possible as long as $e_1$
  is not a bridge in $\Gamma \setminus e_2$. On the other hand, if $e_1$ is a
  bridge in $\Gamma \setminus e_2$ then $e_1^*= \pm e_2^*$ in $H^1(\Gamma,
  \bZ)$, so $f(z)=0$, and $z=\pm e_1^*$ as above.  If $n=0$ we're done, so can
  assume that $z=e_1^*+e_2^*$ by changing the orientation of $e_1$ if
  necessary.

  If $\Gamma\setminus \{e_1,e_2\}$ has a bridge $e_3$ then we get a
  3-term relation on coedges $e_3^*=e_1^* \pm e_2^*$ in $\Gamma$,
  implying either $z=e_3^*$, in which case we're done, or $z=2e_2^*-e_3^*$,
  which would contradict $z(c) \in \{+1, 0, -1\}$ for simple cycles $c$, by an
  argument similar to that given above. The alternative is that
  $\Gamma\setminus\{ e_1,e_2\}$ is bridgeless, which we show is impossible.
  
  Assume $\Gamma\setminus\{e_1,e_2\}$ is bridgeless. Delete any edge $e \neq
  e_1, e_2$. Induction tells us that $z$ becomes a coedge $e_3^*$ in
  $\Gamma\setminus e$, so we have a four term relation $e_1^*+e_2^* = e_3^* +
  ke$ in $\Gamma$. Thus we have a four element cutset $\{e_1, e_2, e, e_3\}$ on
  $\Gamma$ , and so a two element cutset $\{e, e_3\}$ on $\Gamma\setminus
  \{e_1, e_2\}$ for any $e$. But by the previous lemma~\ref{cycle_lemma}, the
  only graphs where every edge is contained in a two element cutset are cyclic
  graphs, and if $\Gamma\setminus \{e_1,e_2\}$ was a cyclic graph we could
  easily find a $(0,1)$-cycle $c$ in $\Gamma$ such that $(e_1^*+e_2^*)(c)=2$, a
  contradiction.  \end{proof}

\begin{theorem}\label{thm:Remm-cubic}
  Any bridgeless cubic graph $G$ admits a strong $\bQ$-emm.
\end{theorem}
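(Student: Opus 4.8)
The plan is to construct a strong $\bQ$-emm on $H^1(G,\bZ)$ for a bridgeless cubic graph $G$ by a direct optimization / averaging argument, using Lemma~\ref{coedge_lemma} to control which vectors can be ``as short as'' the coedges. First I would set up the target: I want a positive definite $q$ on $H^1(G,\bR)$ with $q(e_i^*)=1$ for every edge $e_i$ (every edge of a bridgeless cubic graph is a nonbridge, so all $e_i^*\ne 0$), and with the property that the only integral vectors of length $1$ are $\pm e_i^*$. A natural source of candidates is the family of forms $q=\frac12\sum_k \lambda_k c_k^2$ where $\{c_k\}$ ranges over simple cycles (or a cycle $2$-cover) and $\lambda_k\ge 0$; each such $q$ automatically satisfies $q(e_i^*)\le$ (something controlled by how many chosen cycles pass through $e_i$), and the constraints $q(e_i^*)=1$ become linear equations in the $\lambda_k$. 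The first real step is to show this feasible region is nonempty and in fact has an interior point giving a positive definite $q$ — here one can lean on the fact (used in Section~\ref{sec:Z-emms}) that bridgeless cubic graphs have cycle $2$-covers, or argue more flexibly that the cone spanned by $\{c^2 : c \text{ a simple cycle}\}$ has nonempty interior in $\Sym^2 H_1(G)$ precisely because $G$ is bridgeless.

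Second, among all $q$ in this feasible polytope I would choose one that is ``most generic'' — e.g. maximize $\det q$, or pick an interior point of the face of feasible forms — so that no accidental coincidences of lengths occur. The key structural input is Lemma~\ref{coedge_lemma}: any primitive integral $v^*\in H^1(G,\bZ)$ with $q(v^*)=1$ should be forced to pair to $\{0,\pm1\}$ with all $(0,1)$-cycles, hence be a coedge. The mechanism is that if $q=\frac12\sum\lambda_k c_k^2$ with all $\lambda_k>0$ and $v^*$ pairs with some $c_k$ to an integer of absolute value $\ge 2$, then $q(v^*)\ge \frac12\lambda_k\cdot 4 = 2\lambda_k$, and one arranges the normalization $q(e_i^*)=1$ to force $\lambda_k$ large enough (or combines several such terms) that this exceeds $1$. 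So the strategy reduces to: make sure every simple cycle $c$ of $G$ appears with strictly positive weight, and that the weights are balanced enough that $q(v^*)=1$ with $|v^*(c)|\ge2$ for some $c$ is impossible. Since in a cubic graph the constraint $q(e_i^*)=1$ together with positivity is quite rigid, I would expect that the ``balanced'' weighting (all $\lambda_k$ equal on a suitable symmetric cover, then perturb) does the job.

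Third, once one knows every unit vector is a coedge, one must also verify the converse half of ``strong'' — that every coedge actually \emph{is} a unit vector — but that is immediate from $q(e_i^*)=1$ and the fact that coedges are exactly the $\pm e_i^*$ up to the equivalence $e^*=\pm {e'}^*$; after the contractions in the proof of Lemma~\ref{coedge_lemma} these are genuinely distinct, and contracting/un-contracting does not affect the emm property (cf. Lemma~\ref{lem:reduction}). I would handle small or degenerate cases (the theta graph, graphs that become cyclic after the $3$-edge-connected reduction) by hand, since Lemma~\ref{cycle_lemma} already flags cyclic graphs as the exceptional shape.

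The main obstacle I anticipate is the second step: producing a single weighting of cycles that is simultaneously (a) positive definite, (b) correctly normalized so $q(e_i^*)=1$ for \emph{all} edges at once, and (c) tight enough to rule out \emph{all} spurious unit vectors $v^*$ via Lemma~\ref{coedge_lemma}. Requirements (b) and (c) pull against each other — making $q(e_i^*)=1$ everywhere can force some cycle weights small, which is exactly what could allow a short non-coedge vector. I expect the resolution to be either (i) a clever explicit choice exploiting that cubic graphs have a cycle double cover by $m/3 + \text{something}$ cycles so the linear system is well-conditioned, or (ii) a compactness/genericity argument: show the polytope of admissible normalized forms is full-dimensional and that the ``bad'' locus (where some extra integral vector reaches length $1$) is a finite union of proper subsets, hence avoidable. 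The latter feels more robust and is probably the intended route.
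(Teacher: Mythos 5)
Your proposal takes a genuinely different route from the paper, and while it correctly identifies Lemma~\ref{coedge_lemma} as the key structural input, it has a real unfilled gap at precisely the step you flag as the main obstacle. The paper's proof is an induction on the number of vertices of a bridgeless cubic graph: from $G$ and a chosen edge $e_0$ it builds three smaller cubic graphs $G_1,G_2,G_3$ (by locally rewiring the region containing $e_0$), lifts the inductively-supplied strong emms $q_i$ on $G_i$ to positive \emph{semi}definite forms $\psi_i(q_i)$ on $G$, and finds a convex combination $x_1\psi_1(q_1)+x_2\psi_2(q_2)+x_3\psi_3(q_3)$ that is a strong emm. The point of the local surgery is that a noncoedge cocycle $z$ in $G$ must, by Lemma~\ref{coedge_lemma} and the observation about $(0,1)$-cycles, already fail to be a coedge after passing to at least one $G_i$, so the inductive strong inequality $q_i(\phi_i(z))>1$ can be inherited; the only delicate bookkeeping is on the three generators of $\ker\phi_i$, which reduces to a small explicit linear program in $x_1,x_2,x_3$ plus a case analysis of degenerate shapes.

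Your direct averaging approach has two issues. First, a small one: you propose to ``lean on the fact that bridgeless cubic graphs have cycle $2$-covers,'' but as the paper itself notes the cycle double cover conjecture of Szekeres--Seymour is open; what you actually need (and what is true, via Seymour's description of the cycle cone of a bridgeless graph) is a \emph{fractional} cycle $2$-cover, and with a little work one can even arrange that every simple cycle gets positive weight, which handles positive definiteness. Second, and this is the genuine gap: having fixed the normalization $q(e_i^*)=1$ via a fractional $2$-cover $q=\tfrac12\sum\lambda_k c_k^2$, your mechanism for excluding spurious unit vectors does not close. If $z$ is a noncoedge, Lemma~\ref{coedge_lemma} gives a $(0,1)$-cycle $c=\sum_j c_j$ with $|z(c)|\ge2$; but it may be that $|z(c_j)|\le1$ for each simple summand, so your bound $q(z)\ge 2\lambda_k$ for a single cycle with $|z(c_k)|\ge2$ does not apply, and when it does apply it only gives $q(z)>1$ if $\lambda_k>1/2$, which the normalization constraints generally forbid. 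The ``combine several such terms'' and ``maximize $\det q$'' suggestions are not a proof: there is no a priori reason the relative-interior or $\det$-maximizing point of the fractional $2$-cover polytope avoids every hyperplane $\{q(z)=1\}$ with $z$ a noncoedge, since those hyperplanes are not transverse to the polytope in any way you have established. The paper sidesteps exactly this tension by induction, pushing the hard ``strict inequality on noncoedges'' condition down to smaller graphs where it is assumed, rather than trying to verify it globally in one shot.
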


\begin{proof}
  We will reduce the problem to the existence of strong $\bQ$-emms on certain
  strictly smaller graphs. In this way, we get an inductive
  construction of such forms. Let $e_0$ be an edge in $G$. We can
  produce 3 graphs on fewer vertices by modifying the region of $G$
  containing $e_0$ as shown on Figure~\ref{fig:1}.  Note that $G_1$
  and $G_2$ are analogous to each other. We call the two edges formed
  by this process $e_1$ and $e_2$ in each of $G_1$, $G_2$, $G_3$, and
  we orient them as shown.

\def\setup{
  \node[close] (sw) at (-1,-1) {};
  \node[close] (se) at ( 1,-1) {};
  \node[close] (nw) at (-1, 1) {};
  \node[close] (ne) at ( 1, 1) {};
  \node[close] (sc) at (0,-.5) {};
  \node[close] (nc) at (0, .5) {};
  \node[close] (cnw) at (-.1, .1) {};
  \node[close] (cse) at ( .1,-.1) {};
}
\def\makecircle{
  \draw[dashed,thin] (0,0) circle (1.414);
}

\begin{figure}[h]
  \centering
  \includegraphics{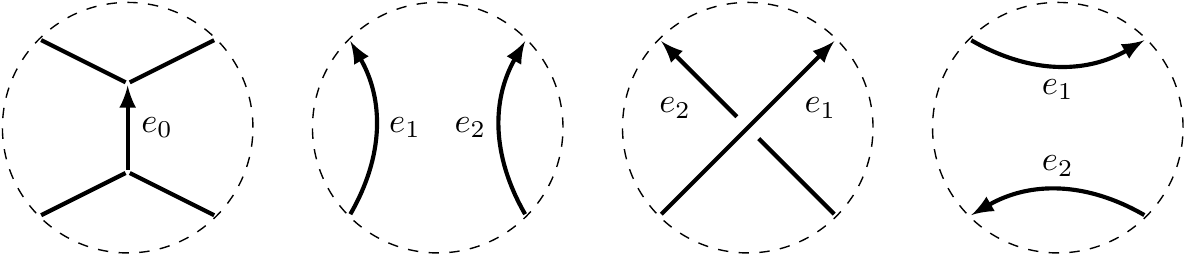}
  \caption{The graphs $G$, $G_1$, $G_2$, $G_3$}
  \label{fig:1}
\end{figure}

Since integral cycles in $G_i$ lift to integral cycles in $G$, we
have maps $H_1(G_i,\mathbb Z) \to H_1(G,\mathbb Z)$, $i
\in \{1,2,3\}$. Hence we get the opposite maps $\phi_i: H^1(G,\mathbb
Z) \to H^1(G_i,\mathbb Z)$.  We note the following:

\begin{claim}\label{cla:coedge}
  For a cocycle $z \in H^1(G,\mathbb Z)$, if $\phi_i(z)$ is a coedge
  for each $i \in \{1,2,3\}$ then $z$ is itself a coedge.
\end{claim}
\begin{proof}
  This follows immediately from the Lemma~\ref{coedge_lemma} and the
  observation that every $(0,1)$-cycle in $G$ corresponds to a $(0,1)$-cycle in
  at least one of the $G_i$
\end{proof}

Examining these maps more closely, we see that the kernels of $\phi_i$ are
generated by the cocycles shown in Figure~\ref{fig:2}.  We also have maps on
quadratic forms $\psi_i: \sym^2 H_1(G_i,\mathbb Z) \to \sym^2 H_1(G,\mathbb
Z)$. By the induction hypothesis we have strong $\bQ$-emms $q_i$ on $G_i$. These lift
to forms $\psi_i(q_i)$ on $G$, positive semidefinite and zero only on
$\ker(\phi_i)$. We wish to build a strong emm as a convex combination
$x_1\psi_1(q_1)+x_2\psi_2(q_2)+x_3\psi_3(q_3)$ where $x_1+x_2+x_3=1, x_i \ge
0$.

\begin{figure}[h]
  \centering
  \includegraphics{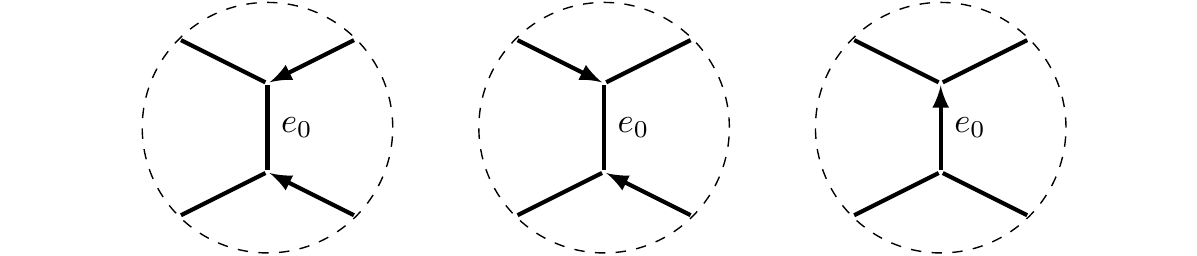}
  \caption{Generators of $\ker\phi_1$, $\ker\phi_2$, $\ker\phi_3$}
  \label{fig:2}
\end{figure}

Assume first that $G_i$ are bridgeless (the special cases where some of the
$G_i$ are not bridgeless will be dealt with later).  Note that for every edge
$e \neq e_0$ we have $\psi_i(q_i)(e^*)=1$ because $\phi_i$ maps coedges to
coedges. For every other (that is noncoedge) integral cocycle $z$ not in
$\ker(\phi_i)$ we have $\psi_i(q_i)(z) \ge 1$, with at least one of the
$\psi_i(q_i)(z) > 1$ by Claim~\ref{cla:coedge}.

Hence if $x_i \neq 0$ for all $i$, we need only to verify the emm conditions on
the three cocycles generating $\ker(\phi_i)$. For brevity we write
$c_i=q_i(e_1^*+e_2^*)$ and note that since $q_i$ is a quadratic form with
$q_i(e^*_1)=q_i(e^*_2)=1$ we must have $q_i(e_1^*-e_2^*)=4-c_i$ . Hence, if
$e^*_1 \pm e^*_2 \neq 0$ in $G_i$ then $c_i \in [1,3]$, with $c_i \in (1,3)$ if
$e^*_1 \pm e^*_2$ are not coedges. We can now write the (strong) emm conditions as follows:

\begin{enumerate}
\item $x_2(4-c_2)+x_3(4-c_3) \ge 1$ when $\ker(\phi_1) \neq \{0\}$ and
  with equality only when the generator of $\ker(\phi_1)$ is a coedge.
\item $x_1(4-c_1)+x_3c_3 \ge 1$ when $\ker(\phi_2) \neq \{0\}$ and
  with equality only when the generator of $\ker(\phi_2)$ is a coedge.
\item $x_1c_1+x_2c_2=1$ ($\ker(\phi_3)$ is generated by a coedge,
  namely $e_0^*$.)
\end{enumerate}

Note that these inequalities are symmetric except for the
symbols $\ge$, $=$. Consider the generic case where $c_1, c_2, c_3 \in (1,3)$
(we'll deal with the nongeneric cases later). We use three solution
types, depending on the values of $c_1$, $c_2$, $c_3$:

\begin{case}
  When $(c_2 \le 2 \myor c_3 \le 2) \myand (c_1 \le 2 \myor
  c_3 \ge 2)$ set $x_1=x_2=\dfrac1{c_1+c_2}$.  Consider the left hand side
  of the first inequality, call it $k$. We have
  $k=\dfrac{4-c_2}{c_1+c_2}+\dfrac{(c_1+c_2-2)(4-c_3)}{c_1+c_2}$. Since
  $c_3 < 3$ we have $k > \dfrac{2+c_1}{c_1+c_2} \ge 1$ when $c_2 \le
  2$.  If $c_3 \le 2$ then $k \ge \dfrac{2c_1+c_2}{c_1+c_2} > 1$. The
  other half of the conjunction follows by symmetry.
\end{case}

\begin{case}
  When $c_1 \ge 2 \myand c_3 < 2$, set $x_2=\epsilon c_1, x_1=1/c_1-\epsilon
  c_2$ for small $\epsilon>0$. Indeed, if we take $\epsilon = 0$ the left hand side of
  the first inequality simply becomes $\dfrac{(c_1-1)(4-c_3)}{c_1}$, which is
  greater than 1. The second inequality is also satisfied (see case 1). Of
  course this doesn't quite work, since $x_2=0$, but by continuity the
  conditions still hold for suitably small $\epsilon$.
\end{case}

\begin{case}
  When $c_2 \ge 2 \myand c_3 > 2$, set $x_1=\epsilon
  c_2, x_2=1/c_2-\epsilon c_1$. This works by symmetry with case 2.
\end{case}	

Hence all the generic cases are solved.
Consider now the exceptional cases. That is, assume that either at least one of
the $G_i$ has a bridge, or that one of the $c_i$ is 0, 1, 3, or 4. By listing all 
possible such cases, we will
show that they are of only three types (up to symmetry), shown in
Figure~\ref{fig:3}.

\begin{figure}[h]
  \centering
  \includegraphics{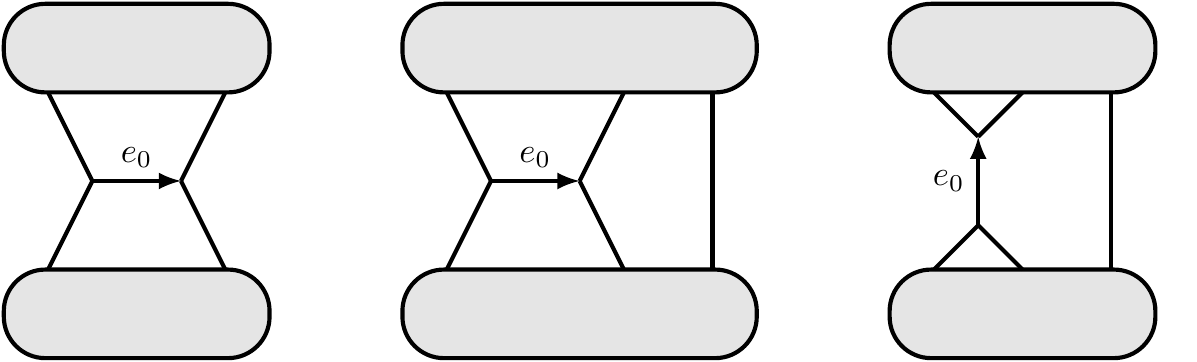}
  \caption{Exceptional cases A, B, C}
  \label{fig:3}
\end{figure}

\begin{enumerate}
\item Some $c_i \in \{0,4\}$
  \begin{enumerate}
  \item $c_1=0$ or $c_2=0$. This cannot happen in a bridgeless graph.
  \item $c_3=4$. This occurs only in graphs of type A.
  \item $c_3=0$. This is symmetric to $c_3=4$
  \item $c_1=4$. This happens exactly when $c_3=0$.
  \item $c_2=4$. This happens exactly when $c_3=4$.
  \end{enumerate}
\item Some $G_i$ contains a bridge.
  \begin{enumerate}
  \item $G_1$ contains a bridge. This is the situation of type B.
  \item $G_2$ contains a bridge. Symmetric to $G_1$ containing a
    bridge.
  \item $G_3$ contains a bridge. This is type C.
  \end{enumerate}
\item Some $c_i \in \{1,3\}$. That is, one of $e_1^* \pm e_2^*$ is a
  coedge. 
  \begin{enumerate}
  \item $c_2=1$. This is type C.
  \item $c_2=3$. This is type B.
  \item $c_1 \in \{1,3\}$. This is symmetric to $c_2 \in \{1,3\}$.
  \item $c_3=3$. This is type B.
  \item $c_3=1$. This is symmetric to $c_3=3$.
  \end{enumerate}
\end{enumerate}
If we choose $e_0$ to not lie in any 2-element cutset, then it becomes unnecessary to deal with case C. 
This is always possible, since by Lemma~\ref{cycle_lemma} every edge is part of a two element cutset
only if $G$ is cyclic. We proceed to construct strong emms for the remaining two cases:
\medskip

(A) We observe that $\ker(\phi_1)=\{0\}$. Moreover, since every (0,1) cycle in
$G$ is of the form $\phi_1(c)$ for some (0,1) cycle $c$ in $G_1$, by the
Lemma~\ref{coedge_lemma} every noncoedge in $H^1(G, \bZ))$ maps through
$\phi_1$ to a noncoedge in $H^1(G_1, \bZ)$.  Hence $\psi_1(q_1)$ satisfies the
conditions for a strong emm.  \medskip

(B) We have $c_2=c_3=3$; we construct the form $\frac 1 3 \psi_2(q_2)+
\frac 2 3 \psi_3(q_3)$. We need to show that this is greater than 1 on all
integral noncoedges. Equivalently (by Lemma~\ref{coedge_lemma}), we need
to show that for any $z \in H^1(G,\bZ)$ with $z(c) > 1 $ for some $(0,1)$-cycle
$c$ there is an $i \in \{2,3\}$ and $(0,1)$-cycle $c'$ in $G_i$ such that
$\phi_i(z)(c') > 1$.

We may assume that $c$ is the sum of at most two simple cycles (if
$z(\sum c_i)>1$ then $z(c_1)>1$ or $z(c_1+c_2)>1$ for some $c_1,c_2$).
Since $c$ is a (0,1) cycle it must be the image of a cycle in at least one of
$G_1$, $G_2$, $G_3$. If it is the image of a cycle in $G_2$ or $G_3$ there is
nothing to show, so we can assume that it is the image of some cycle $k$ in
$G_1$, but not of any cycle in $G_2$ or $G_3$. In order for this to be true $c$
must contain the edge $e_0$ and not the edge that becomes a bridge in $G_1$. 

By symmetry assume that $k$
contains the edge $e_1$ and not $e_2$. Note that since $c$ was a sum
of 2 (possibly trivial) simple cycles, so is $k$, say
$k=k_1+k_2$ where $k_1$ the simple summand containing $e_1$. Note
that, since $k_2$ contains neither $e_1$ nor $e_2$, the image of $k_2$
in $G$ is also the image of simple cycles in $G_2$ and $G_3$, so if
$\phi(z)(k_2) > 1$ the problem would be solved immediately. Hence we
will further assume $\phi(z)(k_1) \neq 0$.

We will proceed to use $k$ to construct a $(0,1)$ cycle $k'$ in $G_1$ that
contains both $e_1$ and $e_2$ and has $\phi_1(z)(k')>1$. If we can succeed in
doing this the result will follow, because depending on the relative
orientations of $e_1$ and $e_2$ in $k'$ there must be either a $(0,1)$ cycle
$c'$ in $G_2$ that maps to the same cycle in $G$ as $k'$ does (so
$\phi_2(z)(c') = \phi_1(z)(k') > 1$) or one in $G_3$ that maps to the same
cycle as $k'$ (so $\phi_3(z)(c') > 1$).

To build $k'$, find a simple cycle $l$ passing through $e_2$ that
intersects $k_2$ in a (possibly empty) arc (continuous path of edges). Such a
cycle always exists, since given any cycle containing $e_2$ (these exist by
connectedness) there is a maximal arc (possibly the whole cycle) disjoint from
$k_2$, and a (possibly empty) arc in $k_2$ joining its endpoints. We can choose
the arc in $k_2$ such that $l$+$k_2$ is also a $(0,1)$ cycle. But now since
$\phi_1(z)(k)>1$, out of the four $(0,1)$ cycles $k_1 \pm l$, $k_1 \pm (l+k_2)$
one of $\phi_1(z)(k_1 \pm l)$, $\phi_1(z)(k_1 \pm (l+k_2))$ must be greater
than 1. (Indeed, recalling our assumption that $\phi_1(z)(k_1) \neq 0$, if
$\phi_1(z)(l) \neq 0$ one of $k_1 \pm l$ will work, if $\phi_1(z)(k_2) \neq 0$
one of $k_1 \pm (l+k_2)$ will work, otherwise all four work .)  Calling this
cycle $k'$ the result follows.  \medskip

Hence the nongeneric cases are resolved. This concludes the proof of
Theorem~\ref{thm:Remm-cubic}.

\end{proof}

\begin{remark}
  In the induction argument of Theorem~\ref{thm:Remm-cubic}, it may
  happen that some of the incoming and outgoing edges are in fact the
  same. Then one of the graphs $G_i$ may have a component which is a
  ``loop with one edge $e$ and zero vertices''. We deal with this
  case formally, by taking $e^2$ to be the corresponding quadratic
  form.
\end{remark}

\begin{theorem}\label{thm:Remm-existence}
  Any graph $G$ admits a strong $\bQ$-emm.
\end{theorem}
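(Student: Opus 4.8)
The plan is to bootstrap from Theorem~\ref{thm:Remm-cubic} using the two reduction lemmas already in hand, Lemma~\ref{lem:reduction} and the additive lemma on irreducible components. First I would recall that for an arbitrary graph $G$ the existence of a strong $\bQ$-emm is governed entirely by its irreducible components $G_k$, since a direct sum of strong emms is a strong emm: if $q=\sum q_k$ on $H^1(G,\bZ)=\oplus_k H^1(G_k,\bZ)$ and $q(v^*)=1$ with $v^*=\sum v_k^*$, then $\sum q_k(v_k^*)=1$ forces exactly one $v_k^*$ to be nonzero (the others vanish since each $q_k(v_k^*)\ge 1$ or $=0$), and that one must be $\pm$ a coedge of $G_k$, hence $v^*$ is a coedge of $G$. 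So it suffices to produce a strong $\bQ$-emm for each irreducible component, i.e.\ for a graph that is either a simple loop or loopless and $2$-connected.

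The loop case is immediate: $q=x^2$ on $H^1\simeq\bZ$ is a strong $\bZ$-emm. For a loopless $2$-connected $G$, I would run the reduction of Lemma~\ref{lem:reduction}: first contract degree-$2$ vertices, which only removes duplicates from the set $\{e_i^*\}$ and does not change $H_1$ — this is harmless for a \emph{strong} emm because it does not enlarge the set of coedges — then repeatedly insert an edge at any vertex of degree $\ge 4$ to arrive at a bridgeless cubic graph $G'$ with $H_1(G',\bZ)=H_1(G,\bZ)$ and $\{e_i^*(G)\}\subset\{e_j^*(G')\}$. The one subtlety is that this last inclusion of coedge sets goes the wrong way for the strong condition: a strong $\bQ$-emm $q'$ for $G'$ has $q'(v^*)=1$ only when $v^*$ is a coedge of $G'$, but a coedge of $G'$ need not be a coedge of $G$. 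However, the coedges of $G'$ that actually lie in the sublattice $H^1(G,\bZ)$ — which here equals all of $H^1(G',\bZ)$ since $H_1$ is unchanged — are precisely the $e_j^*(G')$; and by construction of the edge insertions each such $e_j^*(G')$ is either an $e_i^*(G)$ or is $\pm$ a sum telescoping to some $e_i^*(G)$ along the inserted path, hence is a coedge of $G$. So $q'$ is already a strong $\bQ$-emm for $G$, and by Theorem~\ref{thm:Remm-cubic} such a $q'$ exists.

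The main obstacle I anticipate is exactly this bookkeeping at the reduction step: one must check that ``strongness'' is preserved under both operations (contracting degree-$2$ vertices and inserting edges at high-degree vertices), i.e.\ that the set of integral vectors of norm $1$ does not acquire a spurious non-coedge. For edge insertion this is clear because $H_1$ and hence the lattice $H^1$ is literally unchanged while the coedge set only grows, and the newly added coedges are still coedges of $G$. For degree-$2$ contraction the lattice is again unchanged and a coedge of the contracted graph pulls back to a coedge of $G$ (two parallel-in-series edges have equal coedge classes), so no new norm-$1$ vectors appear. Once this is verified, the theorem follows by assembling: decompose $G$ into irreducible components, handle loops directly, reduce each non-loop component to a bridgeless cubic graph, invoke Theorem~\ref{thm:Remm-cubic}, and take the direct sum.

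\begin{proof}
  By the lemma on irreducible components, $H^1(G,\bZ)=\oplus_k
  H^1(G_k,\bZ)$ with each $G_k$ either a simple loop or loopless and
  $2$-connected, and a direct sum of strong $\bQ$-emms for the $G_k$
  is a strong $\bQ$-emm for $G$: if $q=\sum q_k$ and $q(v^*)=1$ for
  $v^*=\sum v_k^*$, then $\sum q_k(v_k^*)=1$; since each term is
  either $0$ or $\ge 1$, exactly one $v_k^*$ is nonzero and equals a
  coedge of $G_k$ by strongness of $q_k$, so $v^*$ is a coedge of
  $G$. Thus we may assume $G$ is irreducible.

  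If $G$ is a simple loop, $q=x^2$ on $H^1(G,\bZ)\simeq\bZ$ is a
  strong $\bZ$-emm. Otherwise $G$ is loopless and $2$-connected.
  Following Lemma~\ref{lem:reduction}, contract all vertices of degree
  $2$ and then repeatedly insert an edge at a vertex of degree $\ge 4$
  until reaching a bridgeless cubic graph $G'$. Neither operation
  changes $H_1$, so $H^1(G',\bZ)=H^1(G,\bZ)$ as a lattice. The set
  $\{e_j^*\}$ of coedges of $G'$ contains $\{e_i^*\}$, the coedges of
  $G$, and conversely every coedge of $G'$ is, by construction, a
  coedge of $G$ (a coedge of a series pair equals the common coedge
  class, and an inserted edge telescopes along its path to an original
  edge).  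By Theorem~\ref{thm:Remm-cubic}, $G'$ admits a strong
  $\bQ$-emm $q'$: a positive definite form on $H^1(G',\bZ)$ whose
  norm-$1$ integral vectors are exactly the $\pm e_j^*$. Viewed on
  $H^1(G,\bZ)$, the form $q'$ is positive definite with
  $q'(e_i^*)=1$, and its norm-$1$ integral vectors are the $\pm
  e_j^*$, each of which is a coedge of $G$. Hence $q'$ is a strong
  $\bQ$-emm for $G$.
\end{proof}
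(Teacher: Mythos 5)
Your overall plan matches the paper's: reduce to irreducible components, then to cubic graphs via Lemma~\ref{lem:reduction}, then invoke Theorem~\ref{thm:Remm-cubic}. The handling of loops and the direct-sum argument are also fine. But there is a genuine gap in the step where you claim strongness passes automatically from the cubic graph $G'$ back to $G$.

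The claim that ``every coedge of $G'$ is, by construction, a coedge of $G$'' is false. Edge insertion at a vertex $v$ of degree $\ge 4$ splits $v$ into $v_1, v_2$ joined by a new edge $e$, distributing the old incident edges between them. To end up cubic, $v_1$ must receive at least two old edges, say $a,b$; then the vertex relation at $v_1$ in $G'$ gives $e^* = -(a^*+b^*)$ in $H^1(G',\bZ) = H^1(G,\bZ)$. For $e^*$ to be a coedge of $G$ one would need $a^*+b^* = \pm f^*$ for some edge $f$ of $G$, i.e.\ $\{a,b,f\}$ a $3$-edge cut, which generically fails (e.g.\ in $K_5$ no $3$-edge cuts exist). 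Your ``telescoping'' picture describes edge \emph{subdivision} (the inverse of suppressing a degree-$2$ vertex), not splitting a high-degree vertex; the two operations behave very differently for coedges. The paper explicitly flags exactly this issue: the inclusion $S(G) \subset S(G')$ may be strict, so a strong $\bQ$-emm $q$ of $G'$ satisfies $q(v^*)=1$ on extra lattice vectors that are not coedges of $G$. It is then repaired by a perturbation: by \cite[4.1]{ErdahlRyshkov_OnLatticeDicing} the distinct vectors ${e_i^*}^2$ of $G'$ are linearly independent, so one can choose $q_0 \in M_{\bQ}$ vanishing on $S(G)$ and strictly positive on $S(G')\setminus S(G)$, and then $q + \epsilon q_0$ with $0<\epsilon\ll 1$ is a strong $\bQ$-emm for $G$. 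Without this (or some substitute) correction, your proof does not establish the \emph{strong} condition, only the plain one.
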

\begin{proof}
  Existence of an $\bQ$-emm follows at once from
  Theorem~\ref{thm:Remm-cubic} and Lemma~\ref{lem:reduction} which
  reduces graph $G$ to a disjoint collection $G'=\sqcup G'_k$ of cubic
  graphs. However, the inclusion $S(G)\subset S(G')$ (see
  Definition~\ref{def:set-squares}) may be strict, so a 
  strong $\bQ$-emm $q$ of $G'$ may not be a strong $\bQ$-emm of $G$.

  By the result \cite[4.1]{ErdahlRyshkov_OnLatticeDicing}
  which we mentioned in Remark~\ref{rem:toroidal-Schottky}, the
  distinct vectors ${e_i^*}^2$ for the graph $G'$ are linearly
  independent. Thus, there exists a $q_0\in M_{\bQ}$ such that $q_0$
  is zero on $S(G)$ and positive on $S(G')\setminus S(G)$.  Then
  $q+\epsilon q_0$ for $0<\epsilon\ll1$ is a required strong
  $\bQ$-emm for $G$.
\end{proof}

\section{Concluding remarks and generalizations}
\label{sec:misc}

\subsection{Characterization of $\bZ$-emms of type $E_n$} It would be
interesting to find a geometric characterization of graphs admitting
$\bZ$-emms of types $E_6$, $E_7$, $E_8$, similar to the
characterization for $A_n$ and $D_n$ given in Theorem~\ref{thm:AnDn}.

\subsection{Special quadratic forms, and physical interpretation}
For any collection of positive real numbers
$(\lambda_1,\dotsc,\lambda_m)$ there is a natural positive definite
quadratic form $Q=\sum \lambda_i {e_i^*}^2$ on the \emph{homology
  group} $H_1(G,\bR)$. Since $Q$ is nondegenerate, we can use it to
identify $H_1$ and $H^1$, thus producing a positive definite quadratic
form $q$ on $H^1$. In coordinates, the matrix of $q$ is the inverse of
the matrix of $Q$.
Searching for an $\bR$-emm of this form leads to a system of $m$
nonlinear equations in $m$ variables which seems to be hard to
solve. We note that our solution for an $\bR$-emm is not of this
special form.

One can make a graph into an electric network by putting 
resistors $\lambda_i$ along the edges $e_i$. The \emph{total energy
  dissipation} of this electric system is $Q$. The condition
$q(e_i^*)=1$ can be reformulated in these terms as follows.
For any edge $e_i$, let $G_i$ be the graph obtained by cutting the
edge $e_i$ in the middle, thus producing two end points $p_i,
q_i$. Then the condition is that the resistance of $G_i$ between the
points $p_i$ and $q_i$ is 1, for each $i=1,\dotsc,m$.

\subsection{All dicing 2nd Voronoi cones}
The method of the proof of Theorem~\ref{thm:Remm-cubic} may also apply
to arbitrary, not necessarily cographic, regular matroids. This would
give a bigger open set in which $\oA_g\perf$ and $\oA_g\vor$ coincide.

\subsection{Other Torelli maps}
The proofs of Theorems~\ref{thm:crit-2nd-vor}, \ref{thm:crit-perf},
\ref{thm:crit-cent} work in a more general situation, if we replace
$S(G)$ by any finite set $\{ {v^*}^2 \}$ of symmetric rank~1 tensors.
Thus, they give regularity criteria for any rational map
$\oM\dashrightarrow\oA_g\utau$, $\tau=\tau\vor$, $\tau\perf$,
$\tau\cent$, for as long as $\oM$ is toroidal and the monodromy map has
a specific form $r_i\mapsto {v_i^*}^2$. For example, once properly
set up, this may apply to intermediate jacobians of cubic 3-folds.

\bibliographystyle{amsalpha}
\newcommand{\etalchar}[1]{$^{#1}$}
\def\cprime{$'$}
\providecommand{\bysame}{\leavevmode\hbox to3em{\hrulefill}\thinspace}
\providecommand{\MR}{\relax\ifhmode\unskip\space\fi MR }
\providecommand{\MRhref}[2]{%
  \href{http://www.ams.org/mathscinet-getitem?mr=#1}{#2}
}
\providecommand{\href}[2]{#2}

\end{document}